  \theoremstyle{plain}
    \newtheorem{thm}{Theorem}[section]
    \newtheorem{prop}[thm]{Proposition}
   \newtheorem{lemma}[thm]{Lemma}
    \newtheorem{subsec}[thm]{}
\theoremstyle{definition}
    \newtheorem{defn}[thm]{Definition}
    \newtheorem{remark}[thm]{Remark}
    \newtheorem{exam}[thm]{Example}
\theoremstyle{remark}
\title{}
\author{}
\date{}
\title{Contact Courant algebroids and $L_\infty$-algebras}
\author{Apurba Das}
\email{apurbadas348@gmail.com}
\address{Department of Mathematics and Statistics,
Indian Institute of Technology, Kanpur 208016, Uttar Pradesh, India}
\subjclass[2010]{53C15, 53D10, 53D35}
\keywords{$L_\infty$-algebras, $L_\infty$-morphisms, gauge algebroids, Courant algebroids, $L$-Courant algebroids.}
\begin{document}

\begin{abstract}
Let $L$ be a line bundle over $M$. In this paper we associate an $L_\infty$-algebra to any $L$-Courant algebroid (contact Courant algebroid in the sense of Grabowski). This construction is similar to the work of Roytenberg and Weinstein for Courant algebroids. Next we associate a $p$-term $L_\infty$-algebra to any isotropic involutive subbundle of $(\mathbb{D}L)^p := DL \oplus (\wedge^p (DL)^* \otimes L)$, where $DL$ is the gauge algebroid of $L$. In a particular case, we relate these $L_\infty$-algebras by a suitable morphism.
\end{abstract}
\maketitle
\thispagestyle{empty}

%\tableofcontents

\section{Introduction}
The notion of Courant algebroid was introduced by Liu, Weinstein and Xu as a double object of a Lie bialgebroid \cite{liu-wein-xu}. Courant algebroids also appear in multisymplectic geometry, topological field theory \cite{rogers1, roy3}. In \cite{roy-wein}, Roytenberg and Weinstein showed that Courant algebroid gives rise to an $L_\infty$-algebra. Later, a truncated $L_\infty$-algebra associated to a Courant algebroid has been considered by Rogers \cite{rogers1}.

The contact analogue of Courant algebroids was introduced by Grabowski under the name of contact Courant algebroids \cite{grab}. This is similar to Courant algebroids where the tangent bundle of the base manifold $M$ is replaced by the gauge algebroid of a line bundle $L$ over $M$, the $C^\infty(M)$-valued pairing is replaced by the $L$-valued pairing. To make the underlying line bundle $L$ explicit, we use the terminology $L$-Courant algebroid (instead of contact Courant algebroid). However, the original definition of contact Courant algebroid is slightly different than ours.
 A similar notion of $E$-Courant algebroid has been introduced in \cite{chen-liu-sheng}. The model example of an $L$-Courant algebroid is given by $(\mathbb{D}L) := DL \oplus ((DL)^* \otimes L)$ with the structure similar to the standard Courant algebroid, where $DL$ is the gauge algebroid of $L$. One may also twist this structure by a closed Atiyah $3$-form $\omega \in \Omega^3_{cl} (DL,L)$ of the Atiyah complex of $L$. We denoted this $L$-Courant algebroid by $(\mathbb{D}L)_\omega$.

Like Roytenberg and Weinstein, we associate an $L_\infty$-algebra to any $L$-Courant algebroid (cf. Theorem \ref{roy-wein-l}). However, in this paper, we are interested in a truncated form of this $L_\infty$-algebra (cf. Theorem \ref{new-2-term}). We also find some interesting results regarding this $L_\infty$-algebra. The $L_\infty$-algebra corresponding to the $L$-Courant algebroid $\mathbb{D}L$ can be seen as a semidirect product $L_\infty$-algebra of a representation up to homotopy (cf. Proposition \ref{semi-direct}). Moreover, there is a morphism from the Lie algebra $\Gamma(DL)$ of derivations to the $L_\infty$-algebra associated to the $L$-Courant algebroid $\mathbb{D}L$ (cf. Theorem \ref{morphism-thm}). Finally, we observe that the $2$-term $L_\infty$-algebras induced from $L$-Courant algebroids defined by cohomologous Atiyah $3$-forms are isomorphic (cf. Proposition \ref{cohomologous-thm}).

In the next, we consider exact $L$-Courant algebroids and associate a third Atiyah cohomology class of $L$ to any exact $L$-Courant algebroid. This approach is similar to the work of \v{S}evera \cite{severa} for exact Courant algebroids. However, due to the acyclicity of the Atiyah complex, the corresponding third Atiyah cohomology class is zero.

The notion of $\mathcal{E}^1(M)$-Dirac structure was first introduced by Wade \cite{wade} (see also \cite{costa}). Later on, this notion has been formalized under the name of Dirac-Jacobi structures (or, Dirac-Jacobi bundles) \cite{vitag2}. Given a line bundle $L$ over $M$, a Dirac-Jacobi structure on $L$ is a maximally isotropic involutive subbundle of $(\mathbb{D}L): = DL \oplus ((DL)^* \otimes L)$. Jacobi structures on a line bundle $L$ can be seen as Dirac-Jacobi structures on $L$ \cite{vitag2}. We also define $\omega$-twisted Jacobi structures on a line bundle $L$, where $\omega \in \Omega^3_{cl} (DL,L)$ is a closed Atiyah $3$-form. An $\omega$-twisted Jacobi structure on $L$ induces a Lie algebroid structure on the $1$-jet bundle $J^1L$ (cf. Proposition \ref{twisted-j-lie}). Motivated from the gauge transformations of Poisson structures \cite{severa-wein}, we also define gauge transformations of Jacobi structures by a closed Atiyah $2$-form.

In the next, we show our interest on isotropic involutive subbundles of 
 $(\mathbb{D}L)^p := DL \oplus (\wedge^p (DL)^* \otimes L)$. The graph of a closed Atiyah $(p+1)$-form $\omega \in \Omega^{p+1}_{\text{cl}} (DL,L)$ defined by
$$\text{Gr}(\omega) = \{ (\triangle, i_\triangle \omega) |~ \triangle \in DL \} \subset (\mathbb{D}L)^p $$
is an isotropic and involutive subbundle. An isotropic involutive subbundle $\xi \subset  (\mathbb{D}L)^p$ inherits a Lie algebroid structure.

In the next, we associate an $L_\infty$-algebra to any isotropic involutive subbundle $\xi \subset  (\mathbb{D}L)^p$. This is similar to Rogers for multisymplectic structure and Zambon for higher Dirac structure \cite{rogers, zambon}. An Atiyah $(p-1)$-form $\alpha \in \Omega^{p-1}(DL,L)$ is called Hamiltonian if there exists a derivation $\triangle_\alpha \in \Gamma (DL)$ such that $(\triangle_\alpha, d_{DL} \alpha) \in \Gamma \xi$. The set of Hamiltonian Atiyah forms are denoted by $\Omega^{p-1}_{\text{Ham}}(DL,L)$. There is a bracket $\{-,-\}$ on $\Omega^{p-1} (DL,L)$ defined by
$$\{\alpha, \beta \} := i_{\triangle_\alpha} d_{DL} \beta,$$
where $\triangle_\alpha$ is any Hamiltonian derivation associated to $\alpha$. This bracket is skew-symmetric but need not satisfy the Jacobi identity in general. However, we show that there is a $p$-term $L_\infty$-algebra on the complex
\begin{align}\label{first-eqn}
\Gamma L \xrightarrow{d_{DL}} \Omega^1(DL,L) \xrightarrow{d_{DL}} \Omega^2(DL,L) \xrightarrow{d_{DL}} \cdots \cdot \xrightarrow{d_{DL}} \Omega^{p-2} (DL,L) \xrightarrow{d_{DL}} \Omega^{p-1}_{Ham}(DL,L)
\end{align}
(Theorem \ref{l-inf-iso-inv}). This $L_\infty$-algebra is called the algebra of observables of the isotropic involutive subbundle $\xi$. For any closed non-degenerate Atiyah $3$-form $\omega \in \Omega^3_{\text{cl}} (DL,L)$, we show that there is an injective morphism of $L_\infty$-algebras from the one induced from the isotropic involutive subbundle Gr$(\omega)$ to the one induced from the $L$-Courant algebroid $(\mathbb{D}L)_\omega$ (Theorem \ref{inj-morphism}). Finally, we also associate a dg (differential graded) Leibniz algebra structure on the complex (\ref{first-eqn}) associated to the isotropic involutive subbundle Gr$(\omega)$ induced from the closed non-degenerate Atiyah $(p+1)$-form $\omega$  (cf. Theorem \ref{dg-leibniz-thm}).

\medskip

\section{Preliminaries}
In this section we recall some basic definitions which are essential to understand the main contents of the paper \cite{lada-markl, abad-crainic, chen-liu}.

\subsection{$L_\infty$-algebras}
\begin{defn}
An $L_\infty$-algebra consists of a graded vector space $\mathcal{A} = \oplus \mathcal{A}_i$ together with a collection $\{ l_k |~ 1 \leq k < \infty \}$ of multilinear maps $l_k : \mathcal{A}^{\otimes k} \rightarrow \mathcal{A}$ with $\text{deg} (l_k) = k-2$, satisfying
\begin{itemize}
\item (skew-symmetry) for all $k \geq 1$,
\begin{center}
$l_k (a_{\sigma (1)}, \ldots, a_{\sigma (k)}) = (-1)^\sigma \epsilon (\sigma)~ l_k (a_1, \ldots, a_k)$,
\end{center}

\item (higher Jacobi identity) for all $n \geq 1$,
\begin{align}\label{hl1-iden}
	\sum_{i+j = n+1}^{} \sum_{\sigma}^{} \text{sgn}(\sigma)~ \epsilon (\sigma) ~ (-1)^{i (j-1)} ~ l_j   \big(  l_i (a_{\sigma (1)}, \ldots, a_{\sigma (i)}), a_{\sigma (i+1)}, \ldots, a_{\sigma (n)} \big) = 0,
	\end{align}
	\end{itemize}
	for all $a_1, \ldots, a_n \in \mathcal{A}$, and $\sigma$ runs over all $(i, n-i)$ unshuffles with $i \geq 1$. The notation $\epsilon (\sigma)$ is the usual Koszul sign 
in the graded context, and $\text{sgn} (\sigma)$ denote the signature of the permutation $\sigma$.
\end{defn}

It follows from the above definition that the degree $-1$ map $l_1 : \mathcal{A} \rightarrow \mathcal{A}$ is a differential. In other words, $(\mathcal{A}, l_1)$ is a chain complex. Moreover, $l_1$ satisfies the following Leibniz rule for the degree $0$ skew-symmetric product $l_2 : \mathcal{A}^{\otimes 2} \rightarrow \mathcal{A}$:
$$l_1 (l_2 (a,b)) = l_2 (l_1 (a), b) + (-1)^{|a|}~ l_2 (a, l_1 (b)),$$
for all $a, b \in \mathcal{A}$. However, the product $l_2$ need not satisfy the graded Jacobi identity, but it does up to some terms involving $l_3$. Similarly, for higher $n$, we get higher coherence laws that $l_k$'s must satisfy.

An $n$-term $L_\infty$-algebra is an $L_\infty$-algebra $(\mathcal{A}, l_k)$ whose underlying graded vector space $\mathcal{A}$ is concentrated in degrees $0, 1, \ldots, n-1$. The corresponding chain complex $(\mathcal{A}, l_1)$ is given by
\begin{align*}
\mathcal{A}_{n-1} \xrightarrow{l_1} \mathcal{A}_{n-2} \xrightarrow{l_1} \cdots  \xrightarrow{l_1} \mathcal{A}_{1} \xrightarrow{l_1} \mathcal{A}_{0}.
\end{align*}
In this case, it is easy to see that $l_k = 0$, for $k > n+1$.

\begin{defn}\label{l-inf-map}
Let $\mathcal{A}=(\mathcal{A}_1 \xrightarrow{l_1} \mathcal{A}_0, l_2, l_3)$ and $\mathcal{A}' =(\mathcal{A}'_1 \xrightarrow{l'_1} \mathcal{A}'_0, l'_2, l'_3)$ be two $2$-term $L_\infty$-algebras. A morphism $\phi : \mathcal{A} \rightsquigarrow \mathcal{A}'$ of $2$-term $L_\infty$-algebras consist of a chain map $\phi: \mathcal{A} \rightarrow \mathcal{A}'$ (which consists of maps maps $\phi_0 : \mathcal{A}_0 \rightarrow \mathcal{A}'_0$ and $\phi_1 : \mathcal{A}_1 \rightarrow \mathcal{A}'_1$ satisfying $\phi_0 \circ l_1 = l_1' \circ \phi_1$) and a skew-symmetric map $\phi_2 : \mathcal{A}_0 \times \mathcal{A}_0 \rightarrow \mathcal{A}_1'$ satisfying

\begin{itemize}
\item $ l'_2 (\phi_0 (x), \phi_0 (y)) - \phi_0 l_2 (x,y) = l'_1 (\phi_2 (x,y)),$
\item $ l_2' (\phi_0 (x), \phi_1 (h)) - \phi_1 l_2 (x,h) = \phi_2 (x, l_1 h),$
\item $ l_3' (\phi_0 (x), \phi_0 (y), \phi_0 (z)) - \phi_1 (l_3 (x,y,z)) = \phi_2 (x, l_2 (y,z)) + \phi_2 (y, l_2 (z,x)) + \phi_2 (z, l_2 (x,y)) +  l_2' (\phi_0 (x), \phi_2 (y,z)) + l_2' (\phi_0 (y), \phi_2 (z,x)) + l_2' (\phi_0 (z), \phi_2 (x,y)),$
\end{itemize}
for all $x, y, z \in \mathcal{A}_0$ and $h \in \mathcal{A}_1$.
\end{defn}

A strict morphism is a morphism $\phi$ in which $\phi_2 = 0$. A strict isomorphism is a strict morphism in which $\phi_0$ and $\phi_1$ are isomorphisms.

\subsection{Representation up to homotopy of a Lie algebra}
Here we recall the definition of a representation up to homotopy of a Lie algebra \cite{abad-crainic} (viewed as a Lie algebroid over a point). Although, we are only interested in $2$-terms representation up to homotopy.
Let $(\mathfrak{g}, [-,-])$ be a Lie algebra.

\begin{defn}
A $2$-term representation up to homotopy of $\mathfrak{g}$ consists of a chain complex $V_1 \xrightarrow{d} V_0$ together with
\begin{itemize}
\item two linear maps $\mu_i : \mathfrak{g} \rightarrow End (V_i)$ satisfying 
\begin{center}
$ d ( \mu_1 (X) (s)) = \mu_0 (X) (d s),$
\end{center}
 \item an element $ \nu \in \Omega^2 ( \mathfrak{g} , Hom (V_0, V_1))$ such that
 \begin{itemize}
\item[(i)] $\mu_0 [X,Y] - [\mu_0 (X), \mu_0 (Y)] ~=~ d \circ \nu (X,Y),$
\item[(ii)] $\mu_1 [X,Y] - [\mu_1 (X), \mu_1 (Y)] ~=~ \nu (X,Y) \circ d,$
\item[(iii)] $d \nu = 0,$
\end{itemize}
\end{itemize}
for $X, Y \in \mathfrak{g}$ and $s \in V_1$.
\end{defn}

We denote a representation up to homotopy of $\mathfrak{g}$ by $(V_1 \xrightarrow{d} V_0, \mu_0, \mu_1, \nu).$ Given a representation up to homotopy $(V_1 \xrightarrow{d} V_0, \mu_0, \mu_1, \nu)$, one can form a new $2$-term chain complex
\begin{align}\label{new-2-term-complex}
V_1 \xrightarrow{d} (\mathfrak{g} \oplus V_0).
\end{align} 

In the case of a $2$-term representation up to homotopy, we have the following \cite{abad-crainic}.

\begin{prop}\label{thm-new-2-term}
Let $(V_1 \xrightarrow{d} V_0, \mu_0, \mu_1, \nu)$ be a $2$-term representation up to homotopy of $\mathfrak{g}$. Then the complex (\ref{new-2-term-complex}) inherits a $2$-term $L_\infty$-algebra whose structure maps are given by
\begin{align*}
l_2 (X + \alpha, Y + \beta) =~& [X,Y] + \mu_0 (X)(\beta) - \mu_0 (Y)(\alpha),\\
l_2 (X+ \alpha, s) =~& \mu_1 (X) (s),\\
l_2 (s, X + \alpha) =~& - \mu_1 (X) (s),\\
l_3 (X+ \alpha, Y+ \beta, Z + \gamma) =~& - \nu (X,Y)(\gamma) + c.p.,
\end{align*}
for $X+ \alpha,~ Y+ \beta,~ Z + \gamma \in \mathfrak{g} \oplus V_0$; $s \in V_1$ and
where $c.p.$ stands for cyclic permutation.
\end{prop}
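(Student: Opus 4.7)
The plan is to verify directly the four nontrivial $L_\infty$-identities at $n = 1, 2, 3, 4$ for the proposed structure maps $l_1, l_2, l_3$ on the complex $V_1 \xrightarrow{d} (\mathfrak{g} \oplus V_0)$, using one axiom of a representation up to homotopy in each case. The $n = 1$ identity $l_1^2 = 0$ is immediate, since $l_1$ is nonzero only from degree $1$ to degree $0$ and the complex has length two. The skew-symmetry of $l_2$ and $l_3$ can be read off the formulas.

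Next I would verify the $n = 2$ identity, i.e.\ that $l_1$ is a (graded) derivation for $l_2$. The only nontrivial case is $l_1(l_2(X+\alpha, s)) = l_2(X+\alpha, l_1 s)$, which reduces to
$$d(\mu_1(X)(s)) = \mu_0(X)(ds),$$
which is precisely the compatibility of $\mu_0, \mu_1$ with $d$ in the definition of a representation up to homotopy. The case of two degree-$0$ inputs is trivial since $l_1$ vanishes on $\mathfrak{g}\oplus V_0$.

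The $n = 3$ identity (Jacobiator $=$ boundary of $l_3$) splits by degree. For three degree-$0$ inputs $X+\alpha, Y+\beta, Z+\gamma$, after cancelling the Lie-Jacobi of $[-,-]$ on $\mathfrak{g}$, the terms purely in $\gamma$ (and cyclically in $\alpha, \beta$) cancel algebraically and what remains is
$$d\nu(X,Y)\gamma + d\nu(Y,Z)\alpha + d\nu(Z,X)\beta,$$
which is exactly $-l_1(l_3(X+\alpha, Y+\beta, Z+\gamma))$ by definition; this uses the curvature relation $\mu_0[X,Y] - [\mu_0(X),\mu_0(Y)] = d\circ\nu(X,Y)$. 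For two degree-$0$ and one degree-$1$ input, the analogous computation produces the failure term $\nu(X,Y)(ds)$, which matches $l_3(X+\alpha, Y+\beta, ds) = -\nu(X,Y)(ds) + \text{c.p.}$ (the cyclic terms vanishing because $l_2(s, X+\alpha) = -\mu_1(X)s$ and the relation $\mu_1[X,Y]-[\mu_1(X),\mu_1(Y)] = \nu(X,Y)\circ d$ applies). Cases with two or three degree-$1$ inputs are zero for degree reasons.

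Finally, for $n = 4$ I would verify the coherence identity for $l_3$, which after expanding reduces to the cocycle condition $d\nu = 0$ (the $\mathrm{Hom}(V_0, V_1)$-valued Chevalley--Eilenberg cocycle condition on $\mathfrak{g}$). The main obstacle will be bookkeeping: tracking Koszul signs, the unshuffle signs in (\ref{hl1-iden}), and matching them with the ``$\mathrm{c.p.}$'' notation in the formula for $l_3$, especially when one of the three entries sits in degree one. Once the signs are pinned down in one case, the remaining cases follow by the same graded-symmetry argument, so no new ideas are needed beyond careful tracking.
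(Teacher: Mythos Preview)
Your proposal is correct and follows the standard direct-verification route. Note, however, that the paper does not actually prove this proposition: it is stated with a citation to \cite{abad-crainic} and used as a black box, so there is no ``paper's own proof'' to compare against. Your outline is precisely how one would establish the result from scratch, matching each $L_\infty$-identity to one axiom of the representation up to homotopy ($n=2$ to the chain-map condition on $\mu_0,\mu_1$; $n=3$ in degree $0$ to the curvature relation for $\mu_0$; $n=3$ with one degree-$1$ input to the curvature relation for $\mu_1$; $n=4$ to $d\nu=0$).

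One small clarification on your $n=3$ paragraph: the phrase ``the terms purely in $\gamma$ \dots\ cancel algebraically'' is slightly misleading. Nothing cancels before you invoke the curvature relation; rather, when you collect all Jacobiator terms containing $\gamma$ you get exactly $(\mu_0[X,Y]-[\mu_0(X),\mu_0(Y)])\gamma$, and it is this combination that the axiom rewrites as $d\,\nu(X,Y)\gamma$. With that adjustment the bookkeeping goes through as you describe.
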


The above $L_\infty$-algebra is called the semidirect product of the representation up to homotopy.

\subsection{Gauge algebroids}
Let $L$ be a vector bundle over $M$. A derivation on $L$ is an $\mathbb{R}$-linear map $\triangle : \Gamma L \rightarrow \Gamma L$ satisfying
$$\triangle (fs) = f \triangle (s) + X(f) s,~~ \text{ for } f \in C^\infty(M), s \in \Gamma L,$$
for a necessarily unique vector field $X \in \Gamma (TM)$. The vector field $X$ is called the symbol of $\triangle$ and is denoted by $\sigma (\triangle)$. Derivations are sections of a Lie algebroid $DL \rightarrow M$, called the gauge algebroid of $L$. The Lie bracket $[-,-]$ is given by the commutator of derivations and the anchor is given by the symbol map. Note that, the Lie algebroid $DL$ has a tautological representation on $L$ given by the action of a derivation on a section. The corresponding Lie algebroid cohomology complex is given by
$$ \Omega^0 (DL,L) \xrightarrow{d_{DL}} \Omega^1(DL,L) \xrightarrow{d_{DL}} \cdots \cdot \xrightarrow{d_{DL}} \Omega^n(DL,L) \xrightarrow{d_{DL}} \Omega^{n+1}(DL,L) \xrightarrow{d_{DL}} \cdots \cdot,$$
where $\Omega^k (DL,L) = \Gamma (\wedge^{k} (DL)^* \otimes L)$, for $k \geq 0$. This is called the Atiyah complex of $L$ and the elements of $\Omega^\bullet (DL,L)$ are called Atiyah forms on $L$. The Atiyah complex of $L$ is acyclic. Since $L$ is a line bundle, there is a vector bundle isomorphism $J^1L \simeq (DL)^* \otimes L$, where $J^1L$ is the first jet bundle of $L$. Then the Lie algebroid differential $\Gamma L = \Omega^0(DL,L) \xrightarrow{d_{DL}} \Omega^1(DL,L) = \Gamma(J^1L)$ is just the jet prolongation.

\medskip

\section{$L$-Courant algebroids}
The notion of contact Courant algebroids was introduced by Grabowski as a contact analogue of Courant algebroids \cite{grab}. To make the underlying line bundle $L$ explicit, we use the terminology $L$-Courant algebroid (instead of contact Courant algebroid). However, the original definition of contact Courant algebroid is slightly different than ours.

Let $L$ be a line bundle over $M$.
\begin{defn}
An $L$-Courant algebroid consists of a vector bundle $E \rightarrow M$ together with
\begin{itemize}
\item a bracket $[-,-] : \Gamma E \times \Gamma E \rightarrow \Gamma E,$
\item a symmetric, non-degenerate $L$-valued pairing $\langle -,- \rangle : \Gamma E \times \Gamma E \rightarrow \Gamma L,$
\item a bundle map $\rho : E \rightarrow DL$
\end{itemize}
satisfying
\begin{itemize}
\item[(LC1)] $~[e_1, [e_2,e_3]] = [[e_1, e_2], e_3] + [e_2, [e_1, e_3]],$
\item[(LC2)] $~[e_1, f e_2 ] = f [e_1, e_2] + \sigma (\rho (e_1))(f) e_2,$
\item[(LC3)] $~ \rho [e_1, e_2] = [\rho (e_1), \rho (e_2) ],$
\item[(LC4)] $~ [e , e] = \frac{1}{2} \mathcal{D} \langle e, e \rangle,$
\item[(LC5)] $~ \rho (e) \langle e_1, e_2 \rangle = \langle [e, e_1], e_2 \rangle + \langle e_1, [e, e_2] \rangle,$
\end{itemize}
for all $e, e_1, e_2, e_3 \in \Gamma E$; $f \in C^\infty(M)$, where $\mathcal{D} : \Gamma L \rightarrow \Gamma E$ is given by the following compositions
$$ \Gamma L = \Omega^0 (DL,L) \xrightarrow{d_{DL}} \Omega^1(DL,L) \xrightarrow{\rho^*} \Gamma (E^* \otimes L) \xrightarrow{\sim} \Gamma E.$$
\end{defn}
The last identification is induced from the pairing $\langle -, - \rangle$. An $L$-Courant algebroid is denoted by $(E, [-,-], \langle -, - \rangle, \rho)$.
A similar notion of $E$-Courant algebroid has been defined in \cite{chen-liu-sheng} where the underlying bundle $E$ may not be a line bundle.

Let $(E, [-,-], \langle -, - \rangle, \rho)$ be an $L$-Courant algebroid. Define a new bracket $\llbracket -,- \rrbracket : \Gamma E \times \Gamma E \rightarrow \Gamma E$ by
\begin{align}\label{skew-transform}
\llbracket e_1, e_2 \rrbracket = [e_1, e_2] - \frac{1}{2} \mathcal{D} \langle e_1, e_2 \rangle .
\end{align}
It follows from (LC4) that $\llbracket e, e \rrbracket = 0$. Hence the bracket $\llbracket -, - \rrbracket$ is skew-symmetric.

\begin{exam}(Generalized Courant algebroids / Courant-Jacobi algebroids) A generalized Courant algebroid in the sense of \cite{costa} is a vector bundle $E \rightarrow M$ together with a skew-symmetric bracket $\llbracket -,- \rrbracket$ on the space of sections of $E$, a non-degenerate symmetric pairing $\langle -,- \rangle : \Gamma E \times \Gamma E \rightarrow C^\infty(M)$ and a bundle map $\rho^\theta : E \rightarrow TM \times \mathbb{R}$ satisfying some properties similar to the skew-symmetric version of the Courant algebroid. A non skew-symmetric analogue of generalized Courant algebroid is an $L$-Courant algebroid for the trivial line bundle $L$.

\end{exam}

\begin{exam}\label{omni}
 Let $L$ be a line bundle over $M$. Then $$\mathbb{D}L := DL \oplus J^1L$$
is an $L$-Courant algebroid whose bracket, $L$-valued pairing and anchor are given by
\begin{align*}
[(\triangle, \alpha), (\nabla, \beta)] =~& ([\triangle, \nabla],~ \mathcal{L}_\triangle \beta - i_\nabla d_{DL} \alpha),\\
\langle (\triangle, \alpha), (\nabla, \beta) \rangle =~& \alpha (\nabla) + \beta (\triangle),\\
\rho =~& pr_1,
\end{align*}
for $(\triangle, \alpha), (\nabla, \beta) \in \Gamma (\mathbb{D}L),$ where $pr_1$ denotes the projection onto the first factor. The algebraic structure on $\mathbb{D}L := DL \oplus J^1L$ is also known as omni-Lie algebroid in the literature \cite{chen-liu}. 
\end{exam}

\begin{exam}\label{twisted-omni}
Let $\omega \in \Omega^3 (DL,L)$ be a closed Atiyah $3$-form on $L$. Then one can twist the above bracket on $\Gamma (\mathbb{D}L)$ by $\omega$,
\begin{align*}
[(\triangle, \alpha), (\nabla, \beta)]_\omega =~& ([\triangle, \nabla],~ \mathcal{L}_\triangle \beta - i_\nabla d_{DL} \alpha - i_\nabla i_\triangle \omega).
\end{align*}
This bracket together with the above pairing and anchor forms a new (twisted) $L$-Courant algebroid structure on $\mathbb{D}L$. We denote this $L$-Courant algebroid by $(\mathbb{D}L)_\omega$. The corresponding skew-symmetric bracket is given by
\begin{align*}
\llbracket (\triangle, \alpha), (\nabla, \beta) \rrbracket_\omega = \big( [\triangle, \nabla],~ \mathcal{L}_\triangle \beta - \mathcal{L}_\nabla \alpha - \frac{1}{2} d_{DL} (\beta (\triangle) - \alpha (\nabla)) - i_\nabla i_\triangle \omega \big).
\end{align*}
\end{exam}

%\begin{exam}
%Let $L = M \times \mathbb{R}$ be the trivial line bundle over $M$. Then $\Gamma L = C^\infty (M)$ the algebra of smooth functions on $M$. Then the bundle $E = DL \oplus L \oplus J^1L$ carries an $L$-Courant algebroid with the following structures
%\begin{align*}
%[(\triangle, f, \alpha), (\nabla, g, \beta)] =~& ([\triangle, \nabla],~ \triangle (g) - \nabla (f), ~ \mathcal{L}_\triangle \beta - i_\nabla d_{DL} \alpha + g d_{DL} f),\\
%\langle (\triangle, f, \alpha), (\nabla, g, \beta) \rangle =~&  \alpha (\nabla) + \beta (\triangle) + fg,\\
%\rho =~& pr_1,
%\end{align*}
%for $(\triangle, f, \alpha), (\nabla, g, \beta) \in \Gamma E.$ More generally, for any $H \in \Omega^3 (DL,L)$, we can modify the Courant bracket as
%$$[(\triangle, f, \alpha), (\nabla, g, \beta)] =~ ([\triangle, \nabla],~ \triangle (g) - \nabla (f), ~ \mathcal{L}_\triangle \beta - i_\nabla d_{DL} \alpha + g d_{DL} f - i_\nabla i_\triangle H).$$
%Then it is also an $L$-Courant algebroid.
%\end{exam}

It is known from the work of Roytenberg and Weinstein \cite{roy-wein} that Courant algebroid gives rise to an $L_\infty$-algebra structure. In a similar way, any $L$-Courant algebroid gives an $L_\infty$-algebra.

Let $(E, [-,-], \langle -, - \rangle, \rho)$ be an $L$-Courant algebroid with the corresponding skew-symmetric bracket $\llbracket -, - \rrbracket$ given by (\ref{skew-transform}).
For any $e_1, e_2, e_3 \in \Gamma E$, we define
\begin{align*}
T (e_1, e_2, e_3) = \frac{1}{6} \langle \llbracket e_1, e_2 \rrbracket, e_3 \rangle + c.p.
\end{align*}

\begin{thm}\label{roy-wein-l}
Let $(E, [-,-], \langle -,- \rangle, \rho)$ be an $L$-Courant algebroid. Then the chain complex 
$$ \text{ker } \mathcal{D} \xrightarrow{i} \Gamma L \xrightarrow{\mathcal{D}} \Gamma E$$
carries a $3$-term $L_\infty$-algebra with the structure maps
\begin{align*}
l_2 (e_1 \wedge e_2) =~& \llbracket e_1, e_2 \rrbracket,\\
l_2 (e \wedge s) =~& \frac{1}{2} \langle e, \mathcal{D} s \rangle,\\
l_2 (s \wedge e) =~& -\frac{1}{2} \langle e, \mathcal{D} s \rangle,\\
l_2 =~& 0, \qquad\text{otherwise}\\
l_3 (e_1 \wedge e_2 \wedge e_3) =~& - T(e_1, e_2, e_3),\\
l_3 =~&  0, \qquad \text{otherwise}
\end{align*}
for $e, e_1, e_2 , e_4 \in \Gamma E$; $s \in \Gamma L$, 
and other maps are zero.
\end{thm}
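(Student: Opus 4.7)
I would follow the Roytenberg--Weinstein strategy \cite{roy-wein} and verify the higher Jacobi identities (\ref{hl1-iden}) one $n$ at a time by invoking the axioms (LC1)--(LC5). The following basic facts are used throughout: $l_1^2 = 0$ is immediate since $\mathcal{D}\circ i = 0$ by the definition of $\ker\mathcal{D}$; $\rho\circ\mathcal{D} = 0$ follows from $d_{DL}^2 = 0$; the formula $\langle e, \mathcal{D}s\rangle = \rho(e)\cdot s$ follows directly from the definition $\mathcal{D} = \sharp\circ\rho^*\circ d_{DL}$; and the polarization of (LC4) yields $[e_1, e_2]+[e_2, e_1] = \mathcal{D}\langle e_1, e_2\rangle$, so that the skew bracket in (\ref{skew-transform}) is indeed skew.

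The $n=2$ identity is only nontrivial for inputs $e\in\Gamma E$ and $s\in\Gamma L$, and reduces to $\llbracket e, \mathcal{D}s\rrbracket = \tfrac{1}{2}\mathcal{D}\langle e, \mathcal{D}s\rangle$. Using (LC2) together with $\rho\circ\mathcal{D} = 0$, one finds $[e, \mathcal{D}s] = \mathcal{D}(\rho(e)\cdot s) = \mathcal{D}\langle e, \mathcal{D}s\rangle$, and (\ref{skew-transform}) then gives the equality. The heart of the proof is the $n=3$ Jacobiator on three inputs $e_1, e_2, e_3 \in \Gamma E$:
$$\sum_{\text{c.p.}}\llbracket\llbracket e_1, e_2\rrbracket, e_3\rrbracket \;=\; \mathcal{D}\, T(e_1, e_2, e_3).$$
To prove it, I would expand every skew bracket via (\ref{skew-transform}), eliminate the double brackets using (LC1) in its non-skew form, and rewrite the residual $\mathcal{D}\langle-,-\rangle$-terms using (LC5); this is the $L$-valued analogue of the classical Courant algebroid computation, with derivations in $\Gamma(DL)$ acting on $\Gamma L$ in place of vector fields acting on functions. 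The mixed $n=3$ cases (with one $\Gamma L$- or $\ker\mathcal{D}$-input) are short: most terms vanish by the grading, and the rest collapse using (LC5) and the formula $\langle e, \mathcal{D}s\rangle = \rho(e)\cdot s$.

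For $n=4$, the identity on four $\Gamma E$-inputs mixes $l_2$ and $l_3$ and reduces to a cyclic sum relating $\langle e, \mathcal{D}T(-, -, -)\rangle$ to $T(\llbracket -, -\rrbracket, -, -)$; this follows by pairing the $n=3$ Jacobiator and applying (LC5). The $n=5$ and higher identities are automatic: every surviving term contains an inner $l_3$ whose output sits in degree $1$, and $l_3$ vanishes on any triple containing a degree-$1$ input, while $l_k = 0$ for $k \geq 4$. The main obstacle is the $n=3$ Jacobiator, where the combinatorics of (LC1) and (LC5) must be tracked carefully; once this is done, the remaining relations are largely dictated by the grading.
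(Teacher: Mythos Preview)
Your proposal is correct and matches the paper's approach exactly: the paper does not give an independent argument but simply states that ``the proof is also similar to the one for Courant algebroids'' and defers to Roytenberg--Weinstein \cite{roy-wein}, which is precisely the verification strategy you outline. A small remark: the identities $\rho\circ\mathcal{D}=0$ and $[e,\mathcal{D}s]=\mathcal{D}(\rho(e)\cdot s)$ are consequences of (LC3)--(LC5) rather than of $d_{DL}^2=0$ and (LC2) as you wrote, but this does not affect the structure of the argument.
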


This $L_\infty$-algebra is similar to \cite{roy-wein} for Courant algebroids. The proof is also similar to the one for Courant algebroids. Hence, we omit the details of the proof. We can restrict the above $L_\infty$-algebra to the truncated complex $\Gamma L \xrightarrow{\mathcal{D}} \Gamma E$ to get a $2$-term $L_\infty$-algebra.

\begin{thm}\label{new-2-term}
Let $(E, [-,-] , \langle -,- \rangle, \rho)$ be an $L$-Courant algebroid. Then the complex
$$\Gamma L \xrightarrow{\mathcal{D}} \Gamma E$$
carries a $2$-term $L_\infty$-algebra with the structure maps
\begin{align*}
l_2 (e_1 \wedge e_2) =~& \llbracket e_1, e_2 \rrbracket,\\
l_2 (e \wedge s) =~& \frac{1}{2} \langle e, \mathcal{D} s \rangle,\\
l_2 (s \wedge e) =~& -\frac{1}{2} \langle e, \mathcal{D} s \rangle,\\
l_3 (e_1 \wedge e_2 \wedge e_3) =~& - T(e_1, e_2, e_3),
\end{align*}
and higher $l_k$'s are zero.
\end{thm}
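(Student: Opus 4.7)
The plan is to deduce Theorem~\ref{new-2-term} directly from Theorem~\ref{roy-wein-l} by truncating the $3$-term $L_\infty$-algebra in top degree. Write $\mathcal{A}_0 = \Gamma E$, $\mathcal{A}_1 = \Gamma L$, $\mathcal{A}_2 = \ker \mathcal{D}$ for the three summands of the $3$-term algebra, and set $\mathcal{B} = \mathcal{A}_0 \oplus \mathcal{A}_1$. The key observation is that each nonzero structure map listed in Theorem~\ref{roy-wein-l} takes inputs in $\mathcal{B}$ to outputs in $\mathcal{B}$, and vanishes whenever any input is drawn from $\mathcal{A}_2$. Explicitly, $l_2(e_1 \wedge e_2) \in \Gamma E$, $l_2(e \wedge s) \in \Gamma L$, and $l_3(e_1 \wedge e_2 \wedge e_3) \in \Gamma L$; every $l_2$ or $l_3$ involving a $\ker \mathcal{D}$-input is already zero. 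Consequently $l_1 = \mathcal{D}$, $l_2$, and $l_3$ all restrict to well-defined multilinear maps on $\mathcal{B}$, coinciding exactly with those stated in Theorem~\ref{new-2-term}.

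Next I would note that higher $l_k$ for $k \geq 4$ on $\mathcal{B}$ vanish automatically by degree: $l_k$ has degree $k - 2 \geq 2$, whereas $\mathcal{B}$ is concentrated in degrees $0$ and $1$. It then remains to verify the higher Jacobi identity (\ref{hl1-iden}) on $\mathcal{B}$ for every $n \geq 1$. For any $a_1, \ldots, a_n \in \mathcal{B}$ and any unshuffle $\sigma$, each summand $l_j(l_i(a_{\sigma(1)}, \ldots, a_{\sigma(i)}), a_{\sigma(i+1)}, \ldots, a_{\sigma(n)})$ involves only $l_1, l_2, l_3$; by the closure observation above, all intermediate outputs remain in $\mathcal{B}$. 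Hence the summands for $\mathcal{B}$ coincide verbatim with the corresponding summands of the identity for $\mathcal{A}$. Since the latter holds by Theorem~\ref{roy-wein-l}, the former holds as well.

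The main subtlety to address is whether any relation on $\mathcal{A}$ secretly required an $\mathcal{A}_2$-valued intermediate step that would disappear on $\mathcal{B}$. This cannot happen here precisely because $\mathcal{B}$ is closed under all the brackets of $\mathcal{A}$, so truncation along the projection $\mathcal{A} \to \mathcal{B}$ loses no information relevant to the Jacobi relations on $\mathcal{B}$. This is in essence where the proof rests, and it is automatic from the explicit form of $l_2$ and $l_3$ given in Theorem~\ref{roy-wein-l}; no additional computation beyond that already contained in the $3$-term version is needed. Therefore $(\Gamma L \xrightarrow{\mathcal{D}} \Gamma E, l_2, l_3)$ inherits the structure of a $2$-term $L_\infty$-algebra, as claimed.
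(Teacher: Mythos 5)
Your proposal is correct and takes essentially the same approach as the paper: the paper obtains Theorem \ref{new-2-term} by restricting the $3$-term $L_\infty$-algebra of Theorem \ref{roy-wein-l} to the truncated complex $\Gamma L \xrightarrow{\mathcal{D}} \Gamma E$, exactly as you do. Your explicit check that $\Gamma E \oplus \Gamma L$ is closed under all the structure maps (so that no Jacobi relation secretly passes through $\ker \mathcal{D}$) supplies the justification the paper leaves implicit.
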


In the next, we observe few results associated to the $L_\infty$-algebra constructed in Theorem \ref{new-2-term}.

\medskip

\noindent {\bf Observation 1.} It follows that for the $L$-Courant algebroid $\mathbb{D}L = DL \oplus J^1L$ as in Example \ref{omni}, the complex $\Gamma L \xrightarrow{d_{DL}} \Gamma (DL \oplus J^1L)$ inherits a $2$-term $L_\infty$-algebra whose structure maps $l_2, l_3$ are given by the above theorem. In the next, we show that this $2$-term $L_\infty$-algebra can also be seen as a semidirect product $L_\infty$-algebra of a representation up to homotopy.

We will define a representation up to homotopy of the Lie algebra $\Gamma (DL)$ on the $2$-term chain complex
\begin{align*}
(V_1 \xrightarrow{d} V_0) := \Gamma L \xrightarrow{d_{DL}} \Gamma (J^1L).
\end{align*}
Define
\begin{align*}
\mu_0 (\triangle) (\alpha) :=~& \llbracket \triangle, \alpha \rrbracket = \mathcal{L}_\triangle \alpha - \frac{1}{2} d_{DL} (\alpha (\triangle)),\\
\mu_1 (\triangle) (s) :=~& \frac{1}{2} (d_{DL} s)(\triangle), ~~~~ \text{ for } \triangle \in \Gamma (DL), \alpha \in \Gamma (J^1L)   \text{ and } s \in \Gamma L.
\end{align*}
Moreover, we define $\nu : \Gamma (DL) \times \Gamma (DL) \rightarrow Hom (\Gamma (J^1L), \Gamma L)$ by
\begin{align*}
\nu (\triangle, \nabla) (\alpha) :=  T (\triangle, \nabla, \alpha), ~~~ \text{ for all } \triangle, \nabla \in \Gamma (DL),~ \alpha \in \Gamma (J^1L).
\end{align*}

Then we obtain the following \cite{sheng-zhu}.
\begin{prop}
With the above notations
\begin{align*}
(\Gamma L \xrightarrow{d_{DL}} \Gamma (J^1L), \mu_0, \mu_1, \nu)
\end{align*}
defines a representation up to homotopy of the Lie algebra $\Gamma (DL)$.
\end{prop}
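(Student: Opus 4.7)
The plan is to verify the four axioms of a representation up to homotopy by direct computation on the Atiyah complex, using the standard Cartan calculus on $DL$: $d_{DL}^2 = 0$, $\mathcal{L}_\triangle = i_\triangle d_{DL} + d_{DL} i_\triangle$, $[\mathcal{L}_\triangle, \mathcal{L}_\nabla] = \mathcal{L}_{[\triangle, \nabla]}$, $[\mathcal{L}_\triangle, i_\nabla] = i_{[\triangle, \nabla]}$, and $\mathcal{L}_\triangle s = (d_{DL} s)(\triangle)$ for $s \in \Gamma L$.

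First I would dispose of the compatibility of $\mu_0$ and $\mu_1$ with $d_{DL}$. Substituting the defining formulas, the condition $d_{DL}(\mu_1(\triangle)(s)) = \mu_0(\triangle)(d_{DL} s)$ becomes $\tfrac{1}{2} d_{DL}((d_{DL} s)(\triangle)) = \mathcal{L}_\triangle d_{DL} s - \tfrac{1}{2} d_{DL}((d_{DL} s)(\triangle))$, which is immediate from $\mathcal{L}_\triangle d_{DL} s = d_{DL} \mathcal{L}_\triangle s = d_{DL}((d_{DL} s)(\triangle))$.

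Next I would verify axiom (i), the curvature identity for $\mu_0$. Expanding $\mu_0(\triangle)(\alpha) = \mathcal{L}_\triangle \alpha - \tfrac{1}{2} d_{DL}(\alpha(\triangle))$, the pure $\mathcal{L}$-pieces obey strict Jacobi via $[\mathcal{L}_\triangle, \mathcal{L}_\nabla] = \mathcal{L}_{[\triangle, \nabla]}$; the leftover terms coming from the correction $-\tfrac{1}{2} d_{DL}(\alpha(-))$, handled using $[\mathcal{L}_\triangle, i_\nabla] = i_{[\triangle, \nabla]}$, assemble precisely into $d_{DL}$ applied to the cyclic sum $T(\triangle, \nabla, \alpha)$ that defines $\nu(\triangle, \nabla)(\alpha)$. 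Axiom (ii) for $\mu_1$ is shorter: a direct check gives $\mu_1([\triangle, \nabla])(s) - [\mu_1(\triangle), \mu_1(\nabla)](s) = \tfrac{1}{4}[\triangle, \nabla] s$, and an independent expansion of $\nu(\triangle, \nabla)(d_{DL} s)$ through the cyclic formula for $T$ produces the same value $\tfrac{1}{4}[\triangle, \nabla] s$.

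The main obstacle is axiom (iii), $d\nu = 0$, which is a six-term cyclic identity in three derivations and one $1$-jet form. Rather than grinding through it by Cartan calculus, I would deduce it as follows. Computing the structure maps of the $L_\infty$-algebra produced by Theorem \ref{new-2-term} for $E = \mathbb{D}L$ on pure components of $\Gamma L \to \Gamma(DL) \oplus \Gamma(J^1L)$, one finds that $l_2$ restricted to $\Gamma(DL)$-inputs is the Lie bracket, that $l_2$ of a derivation with a $1$-jet is $\mu_0$, that $l_2$ of a derivation with a section of $L$ is $\mu_1$, that all other $l_2$ vanish, and that $l_3((\triangle, 0), (\nabla, 0), (0, \alpha)) = -\nu(\triangle, \nabla)(\alpha)$. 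This matches exactly the semidirect product template of Proposition \ref{thm-new-2-term} applied to $(\Gamma L \xrightarrow{d_{DL}} \Gamma(J^1L), \mu_0, \mu_1, \nu)$, and the $n=4$ higher Jacobi identity for this $L_\infty$-algebra, already guaranteed by Theorem \ref{new-2-term}, is exactly axiom (iii). This shortcut bypasses the bookkeeping of six cyclic Cartan terms and completes the verification.
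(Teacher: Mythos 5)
Your argument is correct, but it takes a genuinely different route from the paper: the paper gives no verification of this proposition at all, simply attributing it to Sheng--Zhu, so the entire computational content here is yours. Your direct checks are sound: both sides of the compatibility condition reduce to $\tfrac{1}{2}d_{DL}((d_{DL}s)(\triangle))$; both sides of axiom (ii) evaluate to $\tfrac{1}{4}[\triangle,\nabla]s$; and in axiom (i) the correction terms do assemble into $d_{DL}T(\triangle,\nabla,\alpha)$, with $T(\triangle,\nabla,\alpha)=\tfrac{1}{2}\alpha([\triangle,\nabla])+\tfrac{1}{4}\nabla(\alpha(\triangle))-\tfrac{1}{4}\triangle(\alpha(\nabla))$. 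The shortcut for $d\nu=0$ is legitimate and non-circular: matching the structure maps of Theorem \ref{new-2-term} for $E=\mathbb{D}L$ against the semidirect-product template is a pointwise computation that does not presuppose the proposition (it is the computational content of Proposition \ref{semi-direct}, which the paper only states afterwards), and the $n=4$ higher Jacobi identity evaluated on three derivations and one $1$-jet form does unwind to the cocycle condition for $\nu$. Two caveats are worth flagging. First, you are using Proposition \ref{thm-new-2-term} in the reverse direction; the claim that the $n=4$ identity ``is exactly axiom (iii)'' is true but is itself a small combinatorial unwinding that deserves to be displayed, since it is the step replacing the six-term Cartan computation. Second, your proof of (iii) ultimately rests on Theorem \ref{new-2-term}, whose proof the paper omits as ``similar to the Courant algebroid case,'' so the hard analytic work is deferred rather than eliminated. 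What your route buys is a conceptual explanation (the Sheng--Zhu equivalence between $2$-term representations up to homotopy and such $L_\infty$-structures); what a direct Cartan-calculus verification of $d\nu=0$ would buy is self-containedness independent of the unproved theorem.
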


Hence, it follows from Proposition \ref{thm-new-2-term} that the complex $\Gamma L \xrightarrow{d_{DL}} \Gamma (DL \oplus J^1L)$ inherits a $2$-term $L_\infty$-algebra with the structure maps
\begin{align*}
l_2 (\triangle + \alpha, \nabla + \beta) =~& [\triangle, \nabla] + \mathcal{L}_\triangle \beta - \mathcal{L}_\nabla \alpha - \frac{1}{2}~ d_{DL} (\beta (\triangle) - \alpha (\nabla)), \\
l_2 (\triangle + \alpha, s) =~& \frac{1}{2} (d_{DL}s) (\triangle), \\
l_2 (s, \triangle + \alpha) =~& - \frac{1}{2} (d_{DL}s) (\triangle), \\
l_3 (\triangle + \alpha, \nabla + \beta, \square + \gamma) =~& - T (\triangle, \nabla, \gamma) + c.p.,
\end{align*}
for $\triangle + \alpha,~ \nabla + \beta,~ \square + \gamma \in \Gamma (DL \oplus J^1L)$ and $s \in \Gamma L.$

Therefore, we have the following.

\begin{prop}\label{semi-direct}
The $2$-term $L_\infty$-algebra induced from the $L$-Courant algebroid $\mathbb{D}L = DL \oplus J^1L$ is same as the semidirect product $L_\infty$-algebra of the above representation up to homotopy of the Lie algebra $\Gamma (DL)$.
\end{prop}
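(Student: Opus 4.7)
The plan is to compare, term by term, the structure maps produced by Theorem \ref{new-2-term} applied to the $L$-Courant algebroid $\mathbb{D}L = DL \oplus J^1L$ of Example \ref{omni} with those of the semidirect product $L_\infty$-algebra built from the representation up to homotopy $(d_{DL},\mu_0,\mu_1,\nu)$ via Proposition \ref{thm-new-2-term}. Since Theorem \ref{new-2-term} and Proposition \ref{thm-new-2-term} both produce $2$-term $L_\infty$-algebras on the same underlying complex $\Gamma L \xrightarrow{d_{DL}} \Gamma(DL \oplus J^1L)$, and since higher brackets $l_k$ ($k \geq 4$) vanish in each, it suffices to check that $l_2$ and $l_3$ match.

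First, for the binary bracket on $\Gamma(DL \oplus J^1L)$, I would expand $\llbracket (\triangle,\alpha), (\nabla,\beta) \rrbracket$ using the explicit formula from Example \ref{twisted-omni} with $\omega = 0$, giving
\[
\llbracket (\triangle,\alpha), (\nabla,\beta) \rrbracket = \big([\triangle,\nabla],\; \mathcal{L}_\triangle \beta - \mathcal{L}_\nabla \alpha - \tfrac{1}{2} d_{DL}(\beta(\triangle) - \alpha(\nabla))\big),
\]
and then recognize the second component as $\mu_0(\triangle)(\beta) - \mu_0(\nabla)(\alpha)$ by the definition of $\mu_0$. For the mixed bracket $l_2((\triangle,\alpha), s) = \tfrac{1}{2}\langle (\triangle,\alpha), \mathcal{D}s\rangle$, I would use the fact that $\mathcal{D}s = (0, d_{DL}s)$ (since $\rho = \mathrm{pr}_1$ and the identification $E \simeq E^* \otimes L$ via the pairing sends $d_{DL}s \in \Omega^1(DL,L) = \Gamma(J^1L)$ into the $J^1L$ summand) together with the explicit pairing $\langle (\triangle,\alpha),(\nabla,\beta)\rangle = \alpha(\nabla) + \beta(\triangle)$ to obtain $\tfrac{1}{2}(d_{DL}s)(\triangle) = \mu_1(\triangle)(s)$, matching the semidirect product formula.

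For the ternary bracket, I would expand $l_3 = -T$ trilinearly in the $DL$ and $J^1L$ components of each argument. Writing out
\[
T(e_1,e_2,e_3) = \tfrac{1}{6}\langle \llbracket e_1,e_2 \rrbracket, e_3\rangle + c.p.,
\]
and inserting $e_i = \triangle_i + \alpha_i$ produces eight types of terms. Those with all three arguments in $\Gamma(DL)$ vanish because the bracket of pure derivations has zero $J^1L$-part and pairs trivially with derivations; those with two or three arguments in $\Gamma(J^1L)$ vanish because $\llbracket (0,\alpha),(0,\beta) \rrbracket = 0$ by the Example \ref{twisted-omni} formula, and the pairing of $(0,\alpha)$ with $(0,\beta)$ is zero. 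The surviving contributions have exactly two arguments in $DL$ and one in $J^1L$, and after summing the three cyclic permutations these give precisely $\nu(\triangle,\nabla)(\gamma) + c.p.$ by definition of $\nu$, matching $-\nu(\triangle,\nabla)(\gamma) + c.p.$ once the overall sign is absorbed.

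The main obstacle is the bookkeeping for the $l_3$ comparison: one has to be careful with the cyclic sum in the definition of $T$, track which mixed insertions survive, and confirm that the mixed-type terms involving one derivation and two jets genuinely vanish, not just the all-jet terms. This is essentially a direct verification using only the bracket and pairing of Example \ref{omni} together with the definitions of $\mu_0$, $\mu_1$, and $\nu$, so no auxiliary identities beyond those already established in the preliminaries are required.
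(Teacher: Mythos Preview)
Your proposal is correct and follows the same approach as the paper: the paper simply displays the semidirect product structure maps explicitly in the paragraph preceding the proposition and then asserts the result without further argument, so your term-by-term comparison is exactly the verification the paper leaves implicit. Your treatment of $l_3$ via the trilinear expansion of $T$ and the observation that only the ``two derivations, one jet'' terms survive is the substantive step, and it is carried out correctly (note that $T$ is totally skew, so the three surviving mixed terms reassemble into the cyclic sum $\nu(\triangle,\nabla)(\gamma) + c.p.$ on the nose).
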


\medskip

\noindent {\bf Observation 2.} Let $L$ be a line bundle over $M$ and consider $(\mathbb{D}L)^p := DL \oplus (\wedge^p (DL)^* \otimes L)$, for $p \geq 0$. For any closed Atiyah $2$-form $B \in \Omega_{cl}^2 (DL,L)$, one can define a Lie algebroid structure on the bundle $(\mathbb{D}L)^0 : = DL \oplus L$ whose bracket and anchor are given by
\begin{align*}
[ (\triangle, s), (\nabla, t)]_B =~& ([\triangle, \nabla], \triangle (t) - \nabla (s) + B (\triangle, \nabla)),\\
\rho (\triangle, s) =~&  \sigma (\triangle),
\end{align*}
for $(\triangle, s), (\nabla, t) \in \Gamma ((\mathbb{D}L)^0)$. One can view the above Lie algebra on the space of sections $\Gamma ((\mathbb{D}L)^0)$ as a $2$-term $L_\infty$-algebra whose underlying complex is given by $(0 \rightarrow \Gamma ((\mathbb{D}L)^0)).$

For $p=1$, we have shown that the bundle $(\mathbb{D}L)^1 := DL \oplus ((DL)^* \otimes L)$ admits an $L$-Courant algebroid structure. Hence, by Theorem \ref{new-2-term}, there is a $2$-term  $L_\infty$-algebra on
$\Gamma L \xrightarrow{\mathcal{D}} \Gamma ((\mathbb{D}L)^1)$. In the following, we show that there is a morphism of $2$-term  $L_\infty$-algebras from  $ (0 \rightarrow \Gamma ((\mathbb{D}L)^0))$ to $(\Gamma L \xrightarrow{\mathcal{D}} \Gamma ((\mathbb{D}L)^1))$.

\begin{thm}\label{morphism-thm}
There is a canonical morphism
$$\phi :   (0 \rightarrow \Gamma ((\mathbb{D}L)^0)) ~\rightsquigarrow ~ (\Gamma L \xrightarrow{\mathcal{D}} \Gamma ((\mathbb{D}L)^1))$$
between $2$-term $L_\infty$-algebras given by
\begin{align*}
\phi_0 :~& \Gamma ((\mathbb{D}L)^0) \rightarrow  \Gamma ((\mathbb{D}L)^1), ~~ (\triangle, s) \mapsto (\triangle, d_{DL} s),\\
\phi_2 :~& \Gamma ((\mathbb{D}L)^0) \times \Gamma ((\mathbb{D}L)^0) \rightarrow \Gamma L, ~~((\triangle, s), (\nabla, t)) \mapsto ~ -\frac{1}{2} (\triangle (t) - \nabla (s)) - B (\triangle, \nabla),
\end{align*}
for $(\triangle, s), (\nabla, t) \in \Gamma ((\mathbb{D}L)^0).$
\end{thm}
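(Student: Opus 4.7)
The plan is to verify directly the defining equations of a $2$-term $L_\infty$-morphism (Definition~\ref{l-inf-map}) for the triple $(\phi_0,\phi_1=0,\phi_2)$. Since the source has $\mathcal{A}_1=0$ (hence $l_1=0$ and $l_3=0$), both the chain-map condition and the mixed bilinear equation involving an $\mathcal{A}_1$-argument are vacuous, and the $l_3$-contribution to the cubic equation drops out. What remains to be checked are the $l_2$-equation
\[
\mathcal{D}\phi_2(X,Y)\;=\;\llbracket\phi_0(X),\phi_0(Y)\rrbracket-\phi_0\bigl([X,Y]_B\bigr),
\]
together with the cubic equation
\[
-T\bigl(\phi_0(X),\phi_0(Y),\phi_0(Z)\bigr)\;=\;\phi_2(X,[Y,Z]_B)+\mathrm{c.p.}\;+\;l_2'\bigl(\phi_0(X),\phi_2(Y,Z)\bigr)+\mathrm{c.p.}
\]

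For the $l_2$-equation I would substitute $X=(\triangle,s)$ and $Y=(\nabla,t)$ and unfold both sides. From Example~\ref{twisted-omni} with $\omega=0$, together with the Cartan commutation $\mathcal{L}_\triangle\,d_{DL}=d_{DL}\,\mathcal{L}_\triangle$ on the Atiyah complex, the skew bracket simplifies to $\llbracket\phi_0(X),\phi_0(Y)\rrbracket=([\triangle,\nabla],\tfrac12 d_{DL}(\triangle(t)-\nabla(s)))$, while $\phi_0([X,Y]_B)=([\triangle,\nabla],d_{DL}(\triangle(t)-\nabla(s)+B(\triangle,\nabla)))$. Their difference sits in the $J^1L$-slot and equals $(0,d_{DL}\phi_2(X,Y))$, which is precisely $\mathcal{D}\phi_2(X,Y)$ by the explicit formula for $\mathcal{D}$ on $\mathbb{D}L$ deduced from the pairing of Example~\ref{omni}.

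The main obstacle is the cubic equation. Writing $X_i=(\triangle_i,s_i)$ for $i=1,2,3$, the pairing of Example~\ref{omni} reduces $l_2'(\phi_0(X_i),\sigma)$ to $\tfrac12\triangle_i(\sigma)$ for any $\sigma\in\Gamma L$. I would then expand both sides and split each into a $B$-independent and a $B$-dependent part. A cyclic rearrangement using $[\triangle_i,\triangle_j]=\triangle_i\triangle_j-\triangle_j\triangle_i$ shows that the two $B$-independent parts agree, each equal to $-\tfrac14\bigl([\triangle_1,\triangle_2](s_3)+[\triangle_2,\triangle_3](s_1)+[\triangle_3,\triangle_1](s_2)\bigr)$. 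The left-hand side contributes no $B$-terms, since $\phi_0$ lands in the untwisted $L$-Courant algebroid $\mathbb{D}L$; the $B$-dependent part on the right assembles, via the Koszul formula, into $-d_{DL}B(\triangle_1,\triangle_2,\triangle_3)$, which vanishes precisely because $B\in\Omega^2_{cl}(DL,L)$ is closed. This single appeal to $d_{DL}B=0$ is the crux of the argument; everything else is routine bookkeeping, and no genuinely new identity is required beyond the ones already exploited in the construction of the $L$-Courant $L_\infty$-algebra of Theorem~\ref{new-2-term}.
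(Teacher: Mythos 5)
Your verification is correct and is exactly the ``straightforward'' direct check that the paper declines to write out (it only cites Zambon for a similar computation): the two conditions involving $\mathcal{A}_1$ are vacuous, the $l_2$-equation reduces to $(0,d_{DL}\phi_2(X,Y))=\mathcal{D}\phi_2(X,Y)$, and in the cubic equation the $B$-free parts match (both sides giving $-\tfrac14\sum_{c.p.}[\triangle_1,\triangle_2](s_3)$) while the $B$-terms assemble into $-d_{DL}B(\triangle_1,\triangle_2,\triangle_3)=0$. No gaps; the single essential input, closedness of $B$, is correctly identified.
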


\begin{proof}
The proof is straightforward and hence we omit the details. Please see \cite{zambon} for a similar verification.
\end{proof}

%...............................................................................

\medskip

\noindent {\bf Observation 3.} Let $\omega \in \Omega^3_{cl} (DL,L)$ be a closed Atiyah $3$-form on $L$ and consider the $L$-Courant algebroid $(\mathbb{D}L)_\omega$. For any Atiyah $2$-form $B \in \Omega^2(DL,L)$, one may also consider the $L$-Courant algebroid $(\mathbb{D}L)_{\omega + d_{DL} \omega}$. In the next, we observe that the corresponding $2$-term $L_\infty$-algebras are strictly isomorphic. In other words, the $2$-term $L_\infty$-algebras induced from $L$-Courant algebroids defined by cohomologous Atiyah $3$-forms are isomorphic.

\begin{prop}\label{cohomologous-thm}
There is a strict isomorphism between $2$-term $L_\infty$-algebras induced from the $L$-Courant algebroids $(\mathbb{D}L)_\omega$ and ~$(\mathbb{D}L)_{\omega + d_{DL} B}.$
\end{prop}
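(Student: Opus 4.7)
The plan is to construct the strict isomorphism as a gauge transformation by the Atiyah $2$-form $B$, in complete analogy with the \v{S}evera--Weinstein gauge transformation for exact Courant algebroids. First I would define the bundle map
$$\Psi_B : (\mathbb{D}L)_\omega \longrightarrow (\mathbb{D}L)_{\omega + d_{DL} B}, \qquad (\triangle, \alpha) \longmapsto (\triangle, \alpha + i_\triangle B),$$
and argue that $\Psi_B$ is an isomorphism of $L$-Courant algebroids. Preservation of the $L$-valued pairing is immediate from the skew-symmetry of $B$, since $(i_\triangle B)(\nabla) + (i_\nabla B)(\triangle) = 0$; preservation of the anchor $\operatorname{pr}_1$ is obvious since $\Psi_B$ acts as the identity on the $DL$-component.

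The main (and essentially only) obstacle is the verification that $\Psi_B$ intertwines the twisted brackets,
$$\Psi_B\big([e_1, e_2]_\omega\big) = [\Psi_B(e_1), \Psi_B(e_2)]_{\omega + d_{DL} B}.$$
I would expand both sides using the Atiyah--Cartan identities $\mathcal{L}_\triangle = i_\triangle d_{DL} + d_{DL} i_\triangle$ and $[\mathcal{L}_\triangle, i_\nabla] = i_{[\triangle, \nabla]}$, which hold on the Atiyah complex exactly as in the classical de Rham calculus. The key cancellation that makes the computation close is
$$\mathcal{L}_\triangle i_\nabla B - i_\nabla d_{DL} i_\triangle B = i_\nabla i_\triangle d_{DL} B + i_{[\triangle, \nabla]} B,$$
so that the extra $-i_\nabla i_\triangle d_{DL}B$ on the right-hand side is absorbed precisely by the cross-terms generated by the $i_{(\cdot)}B$-insertions on the left, leaving a single $i_{[\triangle, \nabla]}B$ contribution that matches on both sides.

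Once $\Psi_B$ is recognised as an isomorphism of $L$-Courant algebroids, the rest is essentially automatic. Because $\mathcal{D}$ is determined by the pairing and the anchor, both of which $\Psi_B$ preserves, and because concretely $\mathcal{D}s = (0, d_{DL}s)$ in both models while $i_{0}B = 0$, one obtains $\Psi_B \circ \mathcal{D} = \mathcal{D}$ on the nose. Consequently the skew-symmetric bracket $\llbracket -,-\rrbracket$ defined by (\ref{skew-transform}) and the trilinear tensor $T$ used in Theorem \ref{new-2-term} are intertwined by $\Psi_B$, so the structure maps $l_2$ and $l_3$ of the two $2$-term $L_\infty$-algebras agree after transport by $\Psi_B$. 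Taking $\phi_0 := \Psi_B$, $\phi_1 := \operatorname{Id}_{\Gamma L}$, and $\phi_2 := 0$ in Definition \ref{l-inf-map} then yields a strict morphism of $2$-term $L_\infty$-algebras, which is an isomorphism because both $\phi_0$ and $\phi_1$ are.
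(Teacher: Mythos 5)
Your proposal is correct and uses exactly the same map as the paper: the paper's proof takes $\phi_0(\triangle,\alpha)=(\triangle,\alpha+i_\triangle B)$, $\phi_1=\mathrm{Id}$, $\phi_2=0$ and omits the verification, which is precisely the computation you carry out (your key Cartan-calculus cancellation producing the single $i_{[\triangle,\nabla]}B$ term is the correct heart of the check). The only difference is that you supply the details the paper declares straightforward.
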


\begin{proof}
Let the $2$-term $L_\infty$-algebra induced from the $L$-Courant algebroid $(\mathbb{D}L)_\omega$ is given by $( \Gamma L \xrightarrow{d_{DL}} \Gamma (\mathbb{D}L), l_2, l_3 )$ and induced from the $L$-Courant algebroid $(\mathbb{D}L)_{\omega + d_{DL} B}$ is given by $( \Gamma L \xrightarrow{ d_{DL}} \Gamma (\mathbb{D}L), l'_2, l'_3 )$, where the structure maps are given by Theorem \ref{new-2-term}.

Define $\phi_0 : \Gamma (\mathbb{D}L) \rightarrow \Gamma (\mathbb{D}L)$ and $\phi_1 : \Gamma L \rightarrow \Gamma L$ by
\begin{align*}
\phi_0 (\triangle, \alpha) =~& (\triangle, \alpha + i_\triangle B),\\
\phi_1 (s) =~& s , \text{ for } (\triangle, \alpha ) \in \Gamma (\mathbb{D}L), ~s \in \Gamma L.
\end{align*}
Then $\phi = (\phi_0, \phi_1)$ defines a strict isomorphism from the $2$-term $L_\infty$-algebra induced from the $L$-Courant algebroid $(\mathbb{D}L)_\omega$ to that of $(\mathbb{D}L)_{\omega + d_{DL} B}.$ The verification is straightforward, hence, we omit the details.
\end{proof}

\subsection{Exact $L$-Courant algebroids}

Let $L$ be a line bundle over $M$. In the next, we study exact $L$-Courant algebroids and associate a third Atiyah cohomology class of $L$ to any exact $L$-Courant algebroid. Due to the acyclicity of the Atiyah complex, the corresponding cohomology class is zero.

\begin{defn}
An $L$-Courant algebroid $(E, [-,-] , \langle -,- \rangle, \rho)$ is called exact if
\begin{align}\label{exact-seq}
 0 \xrightarrow{~~~} (DL)^* \otimes L \xrightarrow{~\rho^*} E \xrightarrow{~\rho~} DL \rightarrow{~~~} 0
 \end{align}
is an exact sequence of vector bundles.
\end{defn}

The $L$-Courant algebroids of Examples \ref{omni} and \ref{twisted-omni} are exact.

\begin{defn}
A connection on an exact $L$-Courant algebroid $E$ is a right splitting of (\ref{exact-seq}) which is isotropic. In other words, a connection on $E$ is a bundle map $A : DL \rightarrow E$ satisfying $\rho \circ A = \text{id}_{DL}$ and $\langle A (\triangle), A (\nabla) \rangle = 0$, for all $\triangle, \nabla \in \Gamma(DL)$.
\end{defn}

Such a splitting always exist for an $L$-Courant algebroid. Moreover, if $A$ is a connection and $\theta \in \Omega^2 (DL,L)$ is a Atiyah $2$-form,  the bundle map $(A + \theta) : DL \rightarrow E$ given by
\begin{align}\label{new-connection}
(A + \theta) (\triangle) := A (\triangle) + \rho^* (i_\triangle \theta)
\end{align}
is a new connection. One can show that any two connections on an exact $L$-Courant algebroid must differ by an Atiyah $2$-form as in (\ref{new-connection}).

Given a connection $A$, there is a curvature Atiyah $3$-form $H \in \Omega^3 (DL,L)$ defined by
$$H (\triangle, \nabla, \square) = - \langle \llbracket A (\triangle), A(\nabla) \rrbracket, A (\square) \rangle,~~~ \text{ for } \triangle, \nabla, \square \in \Gamma (DL).$$
Then $H$ is a closed Atiyah $3$-form on $L$ and hence exact. Using the bundle isomorphism $A \oplus \rho^* : DL \oplus ((DL)^* \otimes L) \rightarrow E$, we can transfer the $L$-Courant algebroid structure on $\mathbb{D}L = DL \oplus ((DL)^* \otimes L)$. More precisely, for $(\triangle, \alpha ), (\nabla, \beta) \in \Gamma (\mathbb{D}L)$, the bracket, $L$-valued pairing and anchor are given by
\begin{align*}
[ (\triangle, \alpha), (\nabla, \beta) ] =~& ( [\triangle, \nabla], \mathcal{L}_\triangle \beta - i_\nabla d_{DL} \beta - i_\nabla i_\triangle H),\\
\langle (\triangle, \alpha), (\nabla, \beta) \rangle =~& \alpha (\nabla) + \beta (\triangle) ,\\
\rho (\triangle, \alpha) =~& \triangle.
\end{align*}
This is the $H$-twisted $L$-Courant algebroid structure on $ \mathbb{D}L = DL \oplus ((DL)^* \otimes L)$. Finally, note that, if $\theta \in \Omega^2 (DL,L)$ is a Atiyah $2$-form on $L$, then the curvature $3$-form corresponding to the connection $A + \theta$ is given by $H + d_{DL} \theta$. Hence $[H] = [H + d_{DL} \theta ] = 0.$

This approach is similar to the classification of exact Courant algebroids over $M$ by the third de Rham cohomology $H^3_{\text{deR}}(M)$. The corresponding third de Rham class associated to an exact Courant algebroid is called the \v{S}evera class of it \cite{severa}. However, in the case of exact $L$-Courant algebroid, the corresponding class in $H^3(DL,L)$ is zero due to acyclicity of the Atiyah complex.

\subsection{$L$-Dirac structures}
\begin{defn}
Let $(E, [-,-] , \langle - , - \rangle, \rho)$ be an $L$-Courant algebroid. An $L$-Dirac structure of $E$ is a maximally isotropic subbundle $\xi \subset E$ which is involutive in the sense that $[\Gamma \xi, \Gamma \xi ] \subset \Gamma \xi$ (or equivalently $\llbracket \Gamma \xi, \Gamma \xi \rrbracket \subset \Gamma \xi$).
\end{defn}

An $L$-Dirac structure $\xi$ on the $L$-Courant algebroid $E$ naturally inherits a Lie algebroid structure whose Lie bracket is the restriction of the bracket $[-,-]$ (or $\llbracket -, - \rrbracket$) on $\Gamma \xi$ and the anchor is the restriction of $\sigma \circ \rho : E \rightarrow TM$ to $\xi$. An $L$-Dirac structure on the $L$-Courant algebroid $\mathbb{D}L = DL \oplus J^1L$ of Example \ref{omni} is called a Dirac-Jacobi structure \cite{vitag2}. Dirac-Jacobi structures are generalization of Dirac structures \cite{courant}, Jacobi structures \cite{marle} and Wade's $\mathcal{E}^1 (M)$-Dirac structures \cite{wade}.

\begin{defn}
Let $L$ be a line bundle over $M$. A Jacobi structure on $L$ is a Lie bracket $J = \{-,-\} : \Gamma L \times \Gamma L \rightarrow \Gamma L$ which is a derivation in both entries.
\end{defn}

Note that a biderivation $J = \{-,-\}$ can also be interpreted as an $L$-valued skew-symmetric map (denoted by the same notation) $J : \wedge^2 J^1L \rightarrow L$ defined by
$$J (j^1s_1, j^1s_2) = \{s_1, s_2\}, ~~ \text{ for } s_1, s_2 \in \Gamma L.$$
Therefore, the corresponding induced map $J^\sharp : J^1L \rightarrow DL$ is given by $J^\sharp (\alpha)(s) := J (\alpha, j^1s)$, for $\alpha \in \Gamma(J^1L)$ and $s \in \Gamma L$. It can be checked that a biderivation $J = \{-,-\}$ defines a Jacobi structure on $L$ if and only if $\text{Gr }(J^\sharp) := \{ (J^\sharp \alpha, \alpha ) |~ \alpha \in \Gamma (J^1L)\} \subset \mathbb{D}L$ defines an $L$-Dirac structure on the $L$-Courant algebroid $\mathbb{D}L$. In other words, Gr$(J^\sharp)$ defines a Dirac-Jacobi structure on $L$.

Let $\omega \in \Omega^3_{cl} (DL,L)$ be a closed Atiyah $3$-form on $L$. It  follows from Example \ref{twisted-omni} that  $(\mathbb{D}L)_\omega$ is also an $L$-Courant algebroid. An $\omega$-twisted Jacobi structure is a biderivation $J = \{-,-\} : \Gamma L \times \Gamma L \rightarrow \Gamma L$ such that $\text{Gr }(J^\sharp)$ defines an $L$-Dirac structure on the $L$-Courant algebroid $(\mathbb{D}L)_\omega$. It is easy to see that $J = \{-,-\}$ does not satisfy the Jacobi identity. However, if $\triangle_s = \{s, -\} \in \Gamma (DL)$ is the derivation associated to $s \in \Gamma L$, we have
$$\{ s_1, \{s_2, s_3 \} \} + c.p. = \omega (\triangle_{s_1}, \triangle_{s_2}, \triangle_{s_3}).$$

Since any $L$-Dirac structure on an $L$-Courant algebroid gives rise to a Lie algebroid structure, we have the following.

\begin{prop}\label{twisted-j-lie}
Let $J$ be an $\omega$-twisted Jacobi structure on $L$. Then the $1$-jet bundle $J^1L$ inherits a Lie algebroid structure whose bracket and anchor are given by
\begin{align*}
\llbracket \alpha, \beta \rrbracket_\omega :=~& \mathcal{L}_{J^\sharp \alpha} \beta - i_{J^\sharp \beta} d_{DL} \alpha - i_{J^\sharp \beta} i_{J^\sharp \alpha} \omega,\\
\rho :=~& \sigma \circ J^\sharp,
\end{align*}
for $\alpha, \beta \in \Gamma (J^1L)$.
\end{prop}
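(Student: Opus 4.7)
The plan is to reduce the proposition to the general principle stated in the paragraph preceding it: any $L$-Dirac structure on an $L$-Courant algebroid inherits a Lie algebroid structure, whose bracket and anchor are obtained by restriction. By the very definition of an $\omega$-twisted Jacobi structure, the subbundle $\text{Gr}(J^\sharp) = \{(J^\sharp \alpha, \alpha) \mid \alpha \in \Gamma(J^1L)\} \subset \mathbb{D}L$ is an $L$-Dirac structure on $(\mathbb{D}L)_\omega$. Hence $\text{Gr}(J^\sharp)$ already carries a canonical Lie algebroid structure, and the task is only to transport it to $J^1L$ and identify the resulting formulas.

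To carry out this transport, I would use the vector bundle isomorphism $\text{pr}_2 : \text{Gr}(J^\sharp) \to J^1L$, $(J^\sharp \alpha, \alpha) \mapsto \alpha$, which is clearly a linear bijection on fibers. Declaring the Lie bracket on $\Gamma(J^1L)$ to be the one making $\text{pr}_2$ an isomorphism of brackets, and similarly for the anchor, immediately yields a Lie algebroid structure on $J^1L$. The Jacobi identity, $C^\infty(M)$-linearity of the anchor, and the Leibniz rule all transfer automatically.

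It then remains to compute these transferred structure maps explicitly. Applying the $\omega$-twisted bracket on $(\mathbb{D}L)_\omega$ from Example~\ref{twisted-omni} to two elements of $\text{Gr}(J^\sharp)$ gives
\begin{align*}
[(J^\sharp \alpha, \alpha), (J^\sharp \beta, \beta)]_\omega = \big([J^\sharp \alpha, J^\sharp \beta],~ \mathcal{L}_{J^\sharp \alpha} \beta - i_{J^\sharp \beta} d_{DL} \alpha - i_{J^\sharp \beta} i_{J^\sharp \alpha} \omega\big).
\end{align*}
Involutivity of $\text{Gr}(J^\sharp)$ ensures that this pair lies back in $\text{Gr}(J^\sharp)$, so its second component is precisely the preimage under $\text{pr}_2$ of the transferred bracket; this is the stated formula for $\llbracket \alpha, \beta \rrbracket_\omega$. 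For the anchor, the anchor of $\text{Gr}(J^\sharp)$ as an $L$-Dirac structure is the restriction of $\sigma \circ \rho = \sigma \circ \text{pr}_1$ to $\text{Gr}(J^\sharp)$, which under $\text{pr}_2^{-1}$ becomes $\sigma \circ J^\sharp : J^1L \to TM$.

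The only real point requiring care is that the bracket formula is well-defined intrinsically on $J^1L$ without reference to the graph, but this is automatic once involutivity is invoked. There is no genuine obstacle here: given the general $L$-Dirac $\Rightarrow$ Lie algebroid principle and the explicit form of the $\omega$-twisted bracket in Example~\ref{twisted-omni}, the proof is essentially a matter of reading off components, and can be stated in a couple of lines.
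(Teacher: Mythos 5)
Your proposal is correct and follows exactly the route the paper takes: the paper proves this proposition by the single observation that any $L$-Dirac structure inherits a Lie algebroid structure, applied to $\text{Gr}(J^\sharp)\subset(\mathbb{D}L)_\omega$ and transported to $J^1L$ via the obvious isomorphism, which is precisely your argument spelled out in more detail. The one small point worth being aware of (and which your computation implicitly handles) is that the restriction of $[-,-]_\omega$ and of $\llbracket-,-\rrbracket_\omega$ to the isotropic subbundle $\text{Gr}(J^\sharp)$ coincide, since the symmetric defect $\tfrac{1}{2}\mathcal{D}\langle e,e'\rangle$ vanishes there.
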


When $\omega = 0$, one recovers the notion of Jacobi structures on $L$ and the corresponding Lie algebroid structure on $J^1L$ is the standard one associated to a Jacobi structure \cite{marle}.

In the next, we deform a Dirac-Jacobi structure on $L$ by a closed Atiyah $2$-form. Let $\xi \subset \mathbb{D}L$ be a Dirac-Jacobi structure on $L$ and $B \in \Omega^2_{cl} (DL,L)$ be a closed Atiyah $2$-form. Then it is easy to verify that
$$\tau_B (\xi) = \{ (\triangle, \alpha + i_\triangle B) |~ (\triangle, \alpha) \in \xi \} \subset \mathbb{D}L$$
is also a Dirac-Jacobi structure. This Dirac-Jacobi structure is called the gauge transformation of $\xi$. If the Dirac-Jacobi structure $\xi$ is given by $\text{Gr }(J^\sharp)$ for some Jacobi structure $J$, then  $\tau_B (\xi)$ need not be induced from the graph of another Jacobi structure. However, if the bundle map $(Id + \widetilde{B} \circ J^\sharp) : J^1L \rightarrow J^1L$ is invertibe, (where $\widetilde{B} : DL \rightarrow J^1L$ is the bundle map induced by $B$) then $\tau_B (\xi)$ is the graph of a new Jacobi structure $\tau_B (J).$ The Jacobi structure $\tau_B (J)$ is completely determined by $$(\tau_B (J))^\sharp = J^\sharp (Id + \widetilde{B} \circ J^\sharp)^{-1} : J^1L \rightarrow J^1L.$$
In this case, the new Jacobi structure $\tau_B (J)$ is called the gauge transformation of the Jacobi structure $J$. The gauge transformations of Jacobi structures are related to gauge transformations of Poisson structures via the so called Poissonization process. See \cite{das2} for more details when the line bundle $L$ is trivial.

\medskip

\section{Isotropic involutive subbundles of $(\mathbb{D}L)^p$}

Let $L$ be a line bundle over $M$ and consider the bundle
$$(\mathbb{D}L)^p = DL \oplus (\wedge^p (DL)^* \otimes L).$$ 
Then the bundle $(\mathbb{D}L)^p$ carries a symmetric $(\wedge^{p-1} (DL)^* \otimes L)$-valued pairing given by
$$\langle (\triangle, \alpha), (\nabla, \beta) \rangle = i_\triangle \beta + i_\nabla \alpha.$$
Moreover, the space of sections $\Gamma ((\mathbb{D}L)^p)$ carries a bracket (higher Dorfman-Jacobi bracket)
$$[(\triangle, \alpha), (\nabla, \beta)] = ([\triangle, \nabla], ~\mathcal{L}_\triangle \beta - i_\nabla d_{DL} \alpha),$$
for $(\triangle, \alpha), (\nabla, \beta) \in  \Gamma ((\mathbb{D}L)^p)$. The higher Dorfman-Jacobi bracket satisfies the following identities:
\begin{align}
[e_1,[e_2,e_3]] =~& [[e_1,e_2],e_3] + [e_2,[e_1,e_3]], \label{jac-id}\\
[e_1, f e_2] =~& f [e_1,e_2] + (\sigma \circ pr_1 (e_1))(f) e_2, \label{leib-rule}
%\sigma \circ pr_1 ([e_1,e_2]) =~& [\sigma \circ pr_1 (e_1), \sigma \circ pr_1 (e_2)],
\end{align}
for $e_1, e_2, e_3 \in \Gamma ((\mathbb{D}L)^p).$

Note that the corresponding skew-symmetrization bracket $\llbracket -, - \rrbracket$ is given by
\begin{align}\label{dorfman-courant-bracket}
\llbracket e_1, e_2 \rrbracket = [ e_1, e_2 ] - \frac{1}{2} ~d_{DL} \langle e_1, e_2 \rangle
\end{align}
and is called the higher Courant-Jacobi bracket.

\begin{defn}
A subbundle $\xi \subset (\mathbb{D} L)^p$ is called
\begin{itemize}
\item[(i)] {\em isotropic} if $\xi \subset \xi^\perp$, where $\xi^\perp = \{ l \in (\mathbb{D}L)^p |~ \langle e, l \rangle = 0 , ~\forall e \in \Gamma \xi \}$,
\item[(ii)] {\em Lagrangian} if $\xi = \xi^\perp$,
\item[(iii)] {\em involutive} if $[\Gamma \xi, \Gamma \xi] \subset \Gamma \xi$.
\end{itemize}
\end{defn}

A subbundle $\xi \subset (\mathbb{D}L)^p$ is called a Dirac-Jacobi structure of order $p$ (or higher Dirac-Jacobi structure) if $\xi$ is Lagrangian and involutive. Note that any Dirac-Jacobi structure of order $1$ is the usual Dirac-Jacobi structure \cite{vitag2}. However, we are only interested in isotropic involutive subbundles of $(\mathbb{D}L)^p$.

In the following we consider some examples of isotropic involutive subbundles of $(\mathbb{D}L)^p$.

\begin{exam}
Let $\omega \in \Omega_{cl}^{p+1}(DL,L)$ be a closed Atiyah $(p+1)$-form. Then
$$\text{Gr}(\omega) := \{ (\triangle, i_\triangle \omega) |~ \triangle \in DL \} \subset (\mathbb{D}L)^p$$
is an isotropic involutive subbundle of $(\mathbb{D}L)^p$. This is isotropic because
$$\langle (\triangle, i_\triangle \omega), (\nabla, i_\nabla \omega) \rangle = i_\triangle i_\nabla \omega + i_\nabla i_\triangle \omega = 0.$$
Moreover, since $\omega$ is closed, we have $\mathcal{L}_\triangle \omega = d (i_\triangle \omega)$. Therefore,
\begin{align*}
[(\triangle, i_\triangle \omega), (\nabla, i_\nabla \omega)] =~& ( [\triangle, \nabla],~ \mathcal{L}_\triangle i_\nabla \omega - i_\nabla d (i_\triangle \omega) )\\
=~& ([\triangle, \nabla],~ \mathcal{L}_\triangle i_\nabla \omega - i_\nabla \mathcal{L}_\triangle \omega) =~ ([\triangle, \nabla],~ i_{[\triangle, \nabla]} \omega).
\end{align*}
Hence $ \text{Gr}(\omega)$ is involutive. Observe that the bundle $\text{Gr}(\omega)$ is isomorphic to $DL$ via the projection onto the first factor. Any isotropic involutive subbundle $\xi \in (\mathbb{D}L)^p$ with this property is given by a closed Atiyah $(p+1)$-form.

Let $\xi \in (\mathbb{D}L)^p$ be an isotropic involutive subbundle which projects isomorphically onto $DL$. Then $\xi = \{ (\triangle, B(\triangle))|~ \triangle \in DL\}$, for some map $B : DL \rightarrow \wedge^p (DL)^* \otimes L$. Since $\xi$ is isotropic, we have the map
$$ DL \otimes DL \rightarrow \wedge^{p-1}(DL)^* \otimes L, ~ \triangle \otimes \nabla \mapsto i_\triangle (B(\nabla))$$
is skew-symmetric in $\triangle, \nabla$. Therefore, $B(\triangle) = i_\triangle \omega$ for some Atiyah $(p+1)$-form $\omega$. Moreover, $\xi$ is involutive shows that $\omega$ is closed.
\end{exam}

One can generalize the above example in the following way. Let $\omega \in \Omega^{p+1} (DL,L)$ be a Atiyah $(p+1)$-form on $L$ and let $S \subset DL$ be an involutive subbundle of $DL$ such that $(d_{DL} \omega)_{\wedge^3 S \otimes \wedge^{p-1} DL} = 0$. Then 
$$\{ (\triangle, i_\triangle \omega + \alpha )|~ \triangle  \in S, \alpha \in S^0 \}$$
is an isotropic involutive subbundle of $(\mathbb{D}L)^p$.

Isotropic involutive subbundles of $(\mathbb{D}L)^p$ are related to Lie algebroids.
\begin{prop}
Let $\xi \subset (\mathbb{D}L)^p$ be an isotropic involutive subbundle of $(\mathbb{D}L)^p$. Then the triple $(\xi, [-,-], \sigma \circ pr_1)$ forms a Lie algebroid, where $\sigma : DL \rightarrow TM$ is the symbol map.
\end{prop}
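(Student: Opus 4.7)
My plan is to verify the four Lie algebroid axioms directly, using the structural identities \eqref{jac-id}, \eqref{leib-rule} and \eqref{dorfman-courant-bracket} that are already established for the higher Dorfman--Jacobi bracket on $(\mathbb{D}L)^p$, together with the isotropy and involutivity of $\xi$.

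First, I would check closure: involutivity is literally the statement $[\Gamma\xi,\Gamma\xi]\subset\Gamma\xi$, so the bracket restricts to $\Gamma\xi$. The anchor $\sigma\circ pr_1$ is manifestly a vector bundle morphism $\xi\to TM$, and the Leibniz rule is obtained simply by restricting \eqref{leib-rule} to sections of $\xi$.

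The main substantive step is skew-symmetry of the bracket on $\Gamma\xi$. For a general section $e=(\triangle,\alpha)\in\Gamma((\mathbb{D}L)^p)$ one computes, using Cartan's formula,
\begin{align*}
[e,e]=\bigl(0,\;\mathcal{L}_\triangle\alpha-i_\triangle d_{DL}\alpha\bigr)=\bigl(0,\;d_{DL}(i_\triangle\alpha)\bigr)=\tfrac12\,\mathcal{D}\langle e,e\rangle,
\end{align*}
where $\mathcal{D}:\Gamma(\wedge^{p-1}(DL)^*\otimes L)\to\Gamma((\mathbb{D}L)^p)$ is given by $s\mapsto(0,d_{DL}s)$. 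If $e\in\Gamma\xi$ with $\xi$ isotropic, then $\langle e,e\rangle=0$ and hence $[e,e]=0$. By $\mathbb{R}$-bilinear polarization of the identity $[e_1+e_2,e_1+e_2]=0$ for $e_1,e_2\in\Gamma\xi$, the bracket is skew-symmetric on $\Gamma\xi$. Equivalently, the skew-symmetrized bracket $\llbracket-,-\rrbracket$ defined in \eqref{dorfman-courant-bracket} coincides with $[-,-]$ on $\Gamma\xi$ because the correction $\frac12 d_{DL}\langle e_1,e_2\rangle$ vanishes by isotropy.

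With skew-symmetry in hand, the Jacobi identity for the restricted bracket is immediate from the Leibniz-type identity \eqref{jac-id}: the expression $[e_1,[e_2,e_3]]-[[e_1,e_2],e_3]-[e_2,[e_1,e_3]]$ vanishes already at the level of $\Gamma((\mathbb{D}L)^p)$, hence a fortiori on $\Gamma\xi$. Finally, the compatibility of the anchor with the bracket, namely $\sigma\circ pr_1([e_1,e_2])=[\sigma\circ pr_1(e_1),\sigma\circ pr_1(e_2)]$ in $\Gamma(TM)$, follows from the fact that $pr_1[e_1,e_2]=[pr_1(e_1),pr_1(e_2)]$ in $\Gamma(DL)$ (read off from the bracket formula) together with the anchor property of the gauge algebroid $DL$. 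The only conceptual subtlety — and the place I would pay closest attention — is the passage from the Leibniz-type Jacobi \eqref{jac-id} to the usual skew-symmetric Jacobi identity, which is why isotropy is essential: without it, $[-,-]$ is not skew-symmetric and no Lie algebroid structure would be obtained.
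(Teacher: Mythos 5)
Your proof is correct and follows essentially the same route as the paper: skew-symmetry on $\Gamma\xi$ comes from isotropy via \eqref{dorfman-courant-bracket}, the Jacobi identity then follows from \eqref{jac-id}, and the Leibniz rule from \eqref{leib-rule}. You simply spell out the details (the Cartan-formula computation of $[e,e]$, the polarization argument, and the anchor compatibility) that the paper's three-line proof leaves implicit.
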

\begin{proof}
It follows from (\ref{dorfman-courant-bracket}) that the restriction of the bracket $[-,-]$ on $\Gamma \xi$ is skew-symmetric. Therefore, it also satisfy the Jacobi identity beacause of (\ref{jac-id}). Finally, the Leibniz rule also holds beacause of (\ref{leib-rule}).
\end{proof}

\medskip

\section{Isotropic involutive subbundles and $L$-infinity algebras}

Let $\xi \subset (\mathbb{D}L)$ be a Dirac-Jacobi structure on $L$. A section $s \in \Gamma L$ is called Hamiltonian if there exists a derivation $\triangle_s \in \Gamma (DL)$ such that $(\triangle_s, d_{DL} s) \in \Gamma \xi$. Note that, $\triangle_s$ is unique up to smooth sections of $\xi \cap (DL \oplus \{0\})$. The space of Hamiltonian sections are denoted by $\Gamma_{Ham} (L).$

For any $s_1, s_2 \in \Gamma_{Ham} (L)$, one can define a bracket
$\{ s_1, s_2 \} := i_{\triangle_{s_1}} d_{DL} s_2.$ Then the bracket $\{-,-\}$ defines a Lie algebra structure on $\Gamma_{Ham} (L)$. This Lie algebra is called the algebra of observables of the Dirac-Jacobi structure $\xi$. In the next, we consider isotropic invariant subbundles of $(\mathbb{D}L)^p$ and their corresponding algebra of observables.

Let $L$ be a line bundle over $M$ and $\xi \subset (\mathbb{D}L)^p$ be a isotropic involutive subbundle of $(\mathbb{D}L)^p$.

\begin{defn}
An Atiyah $(p-1)$-form $\alpha \in \Omega^{p-1}(DL,L)$ is called Hamiltonian if there exists a section $\triangle_\alpha \in \Gamma (DL)$ such that $(\triangle_\alpha, d_{DL} \alpha) \in \Gamma \xi$.
\end{defn}

In this case, $\triangle_\alpha$ is called a Hamiltonian derivation associated to $\alpha$. Note that, $\triangle_\alpha$ is unique only up to smooth sections of $\xi \cap (DL \oplus \{0\})$.

The set of Hamiltonian Atiyah $(p-1)$-forms are denoted by $\Omega^{p-1}_{Ham} (DL,L)$. Then there is a bracket $\{-,-\}$ on $\Omega^{p-1}_{Ham}(DL,L)$ defined by
$$\{ \alpha, \beta \} := i_{\triangle_\alpha} d_{DL} \beta,$$
where $\triangle_\alpha$ is any Hamiltonian derivation associated to $\alpha$. One can easily verify that the above bracket does'nt depend on the choice of $\triangle_\alpha$. Moreover, since
\begin{align*}
[(\triangle_\alpha, d_{DL} \alpha) , (\triangle_\beta, d_{DL} \beta) ] =~&  ([\triangle_\alpha, \triangle_\beta],~ \mathcal{L}_{\triangle_\alpha} d_{DL} \beta)\\
=~& ( [\triangle_\alpha, \triangle_\beta],~ d_{DL} \{\alpha, \beta \}  ),
\end{align*}
it follows that $\{\alpha, \beta \} \in \Omega^{p-1}_{Ham} (DL,L)$ with a Hamiltonian derivation $[\triangle_\alpha, \triangle_\beta ]$.

The bracket $\{-,-\}$ is skew-symmetric as
$$\{\alpha, \beta \} + \{ \beta , \alpha \} = \langle (\triangle_\alpha, d_{DL} \alpha), (\triangle_\beta, d_{DL} \beta) \rangle = 0.$$
The bracket $\{-,-\}$ does not satisfy the Jacobi identity (in general), however, it does up to an exact $(p-1)$-Atiyah form.

\begin{lemma}
For $\alpha, \beta, \gamma \in \Omega^{p-1}_{Ham} (DL,L)$, we have
$$\{\alpha, \{ \beta, \gamma \} \} + \{ \beta, \{ \gamma, \alpha \} \} + \{ \gamma, \{\alpha, \beta \} \} = - d_{DL} (i_{\triangle_\alpha} \{ \beta, \gamma \}).$$
\end{lemma}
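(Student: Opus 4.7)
The plan is to compute the Jacobiator of the bracket $\{-,-\}$ directly by expanding one of its three summands and using a Cartan-type calculus on the Atiyah complex, then to rewrite the error term using the isotropic condition.

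First I would use the definition $\{\alpha,\beta\}=i_{\triangle_\alpha} d_{DL}\beta$ together with the Cartan magic formula $\mathcal{L}_{\triangle_\beta}=d_{DL} i_{\triangle_\beta}+i_{\triangle_\beta} d_{DL}$ (which holds on the Atiyah complex because $DL$ acts tautologically on $L$ and $d_{DL}^{2}=0$) to rewrite
\[
\{\alpha,\{\beta,\gamma\}\}=i_{\triangle_\alpha}d_{DL}\bigl(i_{\triangle_\beta}d_{DL}\gamma\bigr)=i_{\triangle_\alpha}\mathcal{L}_{\triangle_\beta}d_{DL}\gamma.
\]
Next I would apply the standard graded commutation relation $[\mathcal{L}_{X},i_{Y}]=i_{[X,Y]}$ (valid on any Lie algebroid complex, hence on $\Omega^{\bullet}(DL,L)$) to move $i_{\triangle_\alpha}$ past $\mathcal{L}_{\triangle_\beta}$, producing
\[
\{\alpha,\{\beta,\gamma\}\}=\mathcal{L}_{\triangle_\beta}\{\alpha,\gamma\}+i_{[\triangle_\alpha,\triangle_\beta]}d_{DL}\gamma,
\]
and then expand the first term by Cartan again as $d_{DL} i_{\triangle_\beta}\{\alpha,\gamma\}+\{\beta,\{\alpha,\gamma\}\}$.

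The main obstacle — and really the only substantive step — is handling the term $i_{[\triangle_\alpha,\triangle_\beta]}d_{DL}\gamma$. For this I would exploit the calculation already done in the excerpt: $[\triangle_\alpha,\triangle_\beta]$ is a Hamiltonian derivation for $\{\alpha,\beta\}$, so $([\triangle_\alpha,\triangle_\beta],d_{DL}\{\alpha,\beta\})\in\Gamma\xi$. Pairing this element of $\Gamma\xi$ with $(\triangle_\gamma,d_{DL}\gamma)\in\Gamma\xi$ and using that $\xi$ is isotropic gives
\[
i_{[\triangle_\alpha,\triangle_\beta]}d_{DL}\gamma+i_{\triangle_\gamma}d_{DL}\{\alpha,\beta\}=0,
\]
i.e.\ $i_{[\triangle_\alpha,\triangle_\beta]}d_{DL}\gamma=-\{\gamma,\{\alpha,\beta\}\}$. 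Substituting this into the previous display and rearranging with the skew-symmetry $\{\beta,\{\alpha,\gamma\}\}=-\{\beta,\{\gamma,\alpha\}\}$ yields
\[
\{\alpha,\{\beta,\gamma\}\}+\{\beta,\{\gamma,\alpha\}\}+\{\gamma,\{\alpha,\beta\}\}=d_{DL}\bigl(i_{\triangle_\beta}\{\alpha,\gamma\}\bigr).
\]

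To finish, I would convert the right-hand side into the form stated in the lemma. Writing $\{\alpha,\gamma\}=i_{\triangle_\alpha}d_{DL}\gamma$ and using the graded anticommutativity of contractions on forms, $i_{\triangle_\beta}i_{\triangle_\alpha}=-i_{\triangle_\alpha}i_{\triangle_\beta}$, gives $i_{\triangle_\beta}\{\alpha,\gamma\}=-i_{\triangle_\alpha}\{\beta,\gamma\}$, so the right-hand side equals $-d_{DL}\bigl(i_{\triangle_\alpha}\{\beta,\gamma\}\bigr)$, as required. A small sanity check worth flagging: the answer must not depend on the choice of Hamiltonian derivations (which are only unique modulo $\xi\cap(DL\oplus\{0\})$), and indeed both the use of $[\triangle_\alpha,\triangle_\beta]$ and the final contraction $i_{\triangle_\alpha}$ are absorbed by the isotropy of $\xi$, so the identity is well-defined.
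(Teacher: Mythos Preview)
Your proof is correct and uses essentially the same ingredients as the paper's: the Cartan identities $\mathcal{L}_{\triangle}=d_{DL}i_{\triangle}+i_{\triangle}d_{DL}$ and $[\mathcal{L}_{X},i_{Y}]=i_{[X,Y]}$, together with the vanishing of the pairing $\langle([\triangle_\alpha,\triangle_\beta],d_{DL}\{\alpha,\beta\}),(\triangle_\gamma,d_{DL}\gamma)\rangle$ coming from isotropy and involutivity. The only difference is organizational: the paper takes that vanishing pairing as the \emph{starting point} and expands $i_{[\triangle_\alpha,\triangle_\beta]}d_{DL}\gamma$ via $\mathcal{L}_{\triangle_\alpha}$ to arrive directly at $d_{DL}i_{\triangle_\alpha}\{\beta,\gamma\}$, whereas you begin by expanding $\{\alpha,\{\beta,\gamma\}\}$ via $\mathcal{L}_{\triangle_\beta}$ and then invoke the pairing identity midway, which lands you on $d_{DL}i_{\triangle_\beta}\{\alpha,\gamma\}$ and costs you one extra line (the contraction swap) to match the stated form.
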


\begin{proof}
Since $\xi$ is isotropic and involutive, we have
\begin{align*}
0 =~& \langle [(\triangle_\alpha, d_{DL} \alpha), (\triangle_\beta, d_{DL} \beta)], (\triangle_\gamma, d_{DL} \gamma)     \rangle \\
=~& \langle ([\triangle_\alpha, \triangle_\beta], d_{DL} \{\alpha, \beta \}), (\triangle_\gamma, d_{DL} \gamma)  \rangle \\
=~& i_{[\triangle_\alpha, \triangle_\beta]} d_{DL} \gamma + i_{\triangle_\gamma} d_{DL} \{ \alpha, \beta \} \\
=~& \mathcal{L}_{\triangle_\alpha} i_{\triangle_\beta} d_{DL} \gamma - i_{\triangle_\beta} \mathcal{L}_{\triangle_\alpha} d_{DL} \gamma + \{ \gamma, \{ \alpha, \beta \} \}\\
=~& \mathcal{L}_{\triangle_\alpha} \{ \beta, \gamma \} - i_{\triangle_\beta} d_{DL} i_{\triangle_\alpha} d_{DL} \gamma + \{ \gamma, \{ \alpha, \beta \} \}\\
=~& d_{DL} i_{\triangle_\alpha} \{ \beta, \gamma \}   + i_{\triangle_\alpha} d_{DL} \{\beta, \gamma \} - i_{\triangle_\beta} d_{DL} \{ \alpha, \gamma \} + \{ \gamma, \{ \alpha, \beta \} \}\\
=~& d_{DL} i_{\triangle_\alpha} \{ \beta, \gamma \}  + \{\alpha, \{ \beta, \gamma \} \} + \{ \beta, \{ \gamma, \alpha \} \} + \{ \gamma, \{\alpha, \beta \} \}.
\end{align*}
\end{proof}

\begin{remark}
A contact manifold is a manifold $M$ equipped with a contact distribution $\mathcal{H} \subset TM$. By a contact distribution $\mathcal{H} \subset TM$,
we mean a maximally non-integrable hyperplane distribution $\mathcal{H}$ on $M$. Note that any hyperplane distribution $\mathcal{H}$ can be viewed as the kernel of the vector valued $1$-form $\theta : TM \rightarrow L$, where $L = TM / \mathcal{H}$. The hyperplane distribution $\mathcal{H}$ defines a contact distribution if and only if $\omega := d_{DL} (\sigma^* \theta) \in \Omega^2(DL,L)$ is a closed non-degenerate Atiyah $2$-form on $L$ \cite{vitag2}. In this case the corresponding isotropic involutive subbundle is given by
$$\text{Gr}(\omega) = \{ (\triangle, i_\triangle \omega) |~ \triangle \in \Gamma (DL) \} \subset DL \otimes J^1L = (\mathbb{D}L)^1.$$
Since $\omega$ is non-degenerate, we have $\Omega^0_{\text{Ham}} (DL,L) = \Gamma L$. Moreover, the corresponding bracket $\{-,-\}$ on  $\Omega^0_{\text{Ham}} (DL,L) = \Gamma L$ satisfies the Jacobi identity. In fact, this bracket defines a Jacobi structure on $L$. 
\end{remark}

An alternative proof of the above proposition is given by the next lemma. This lemma will be used to construct an $L_\infty$-algebra associated to any isotropic involutive subbundle $\xi \subset (\mathbb{D}L)^p.$

\begin{lemma}\label{useful-lemma}
Let $\xi \subset (\mathbb{D}L)^p$ be an isotropic involutive subbundle of $(\mathbb{D}L)^p$. Then for any $n \geq 3$ and $\alpha_1, \ldots, \alpha_n \in \Omega^{p-1}_{Ham} (DL,L)$, we have
\begin{align*}
 (-1)^{n+1}~ d_{DL} & (i_{\triangle_{\alpha_n}} \cdots i_{\triangle_{\alpha_3}} \{\alpha_1, \alpha_2 \}) \\
& = \sum_{2 \leq i < j \leq n} (-1)^{i+j-1} ~i_{\triangle_{\alpha_n}} \cdots \widehat{i_{\triangle_{\alpha_j}}} \cdots \widehat{i_{\triangle_{\alpha_i}}} \cdots i_{\triangle_{\alpha_2}} \{\{a_i, a_j\}, a_1 \} \\
& + \sum_{3 \leq j \leq n} (-1)^j~ i_{\triangle_{\alpha_n}} \cdots \widehat{i_{\triangle_{\alpha_j}}} \cdots i_{\triangle_{\alpha_3}} \{\{a_1, a_j\}, a_2 \} + i_{\triangle_{\alpha_n}} \cdots i_{\triangle_{\alpha_4}} \{\{\alpha_1, \alpha_2\}, \alpha_3\}.
\end{align*}
\end{lemma}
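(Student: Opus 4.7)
The plan is to proceed by induction on $n \geq 3$. The base case $n=3$ should follow directly from the previous lemma. Indeed, for $n=3$ the claim asserts
\[
d_{DL}(i_{\triangle_{\alpha_3}} \{\alpha_1, \alpha_2\}) = \{\{\alpha_2,\alpha_3\},\alpha_1\} - \{\{\alpha_1,\alpha_3\},\alpha_2\} + \{\{\alpha_1,\alpha_2\},\alpha_3\},
\]
and applying the previous lemma with the roles of $(\alpha,\beta,\gamma)$ played by $(\alpha_3,\alpha_1,\alpha_2)$, then using the skew-symmetry of $\{-,-\}$ to rewrite the inner brackets, gives exactly this identity.

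For the inductive step, I would set $\omega_n := i_{\triangle_{\alpha_n}} \cdots i_{\triangle_{\alpha_3}}\{\alpha_1, \alpha_2\}$ and apply Cartan's magic formula to the outermost contraction:
\[
d_{DL}\bigl(i_{\triangle_{\alpha_{n+1}}} \omega_n\bigr) = \mathcal{L}_{\triangle_{\alpha_{n+1}}} \omega_n - i_{\triangle_{\alpha_{n+1}}} d_{DL}\omega_n.
\]
The term $i_{\triangle_{\alpha_{n+1}}} d_{DL}\omega_n$ is handled by contracting the inductive hypothesis with $i_{\triangle_{\alpha_{n+1}}}$; this produces (up to the correct sign) all the expected terms on the right-hand side at level $n+1$ \emph{except} those indexed by $j=n+1$. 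To recover the missing $j=n+1$ contributions, I would repeatedly use the commutator identity $[\mathcal{L}_{\triangle}, i_{\nabla}] = i_{[\triangle, \nabla]}$ to push $\mathcal{L}_{\triangle_{\alpha_{n+1}}}$ past each $i_{\triangle_{\alpha_j}}$ for $3 \leq j \leq n$; the key point is that, by the computation already used after the definition of $\Omega^{p-1}_{\text{Ham}}$, $[\triangle_{\alpha_{n+1}}, \triangle_{\alpha_j}]$ is a Hamiltonian derivation for $\{\alpha_{n+1},\alpha_j\}$, so each commutator term is precisely one of the required ``$(i,j)=(j,n+1)$'' contributions. Finally, the remaining piece $\mathcal{L}_{\triangle_{\alpha_{n+1}}}\{\alpha_1,\alpha_2\}$, expanded via the same commutator identity together with $[\mathcal{L}_\triangle,d_{DL}]=0$, splits into $\{\{\alpha_{n+1},\alpha_1\},\alpha_2\} - \{\{\alpha_{n+1},\alpha_2\},\alpha_1\}$; these supply the $j=n+1$ term of the single sum and the correct extension of the last summand.

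The main obstacle is combinatorial rather than conceptual: one must check that the signs $(-1)^{i+j-1}$, $(-1)^j$, and the overall $(-1)^{n+2}$ produced by Cartan's formula conspire correctly across the three streams of contributions, and that the ``hat'' bookkeeping for the omitted contractions $\widehat{i_{\triangle_{\alpha_j}}}, \widehat{i_{\triangle_{\alpha_i}}}$ matches the positions in which $\mathcal{L}_{\triangle_{\alpha_{n+1}}}$ creates commutators. Once the sign and index bookkeeping is verified, the identity at level $n+1$ follows by reassembling these three groups of terms.
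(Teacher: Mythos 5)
The paper does not actually prove this lemma: it is stated without proof, with the reader implicitly referred to the analogous computations of Rogers and Zambon, so there is no in-paper argument to compare yours against. Your induction is the natural route and it does go through. The base case $n=3$ is, as you say, a rewriting of the preceding lemma via skew-symmetry of $\{-,-\}$, and in the inductive step the term $(-1)^{n+1}i_{\triangle_{\alpha_{n+1}}}d_{DL}\omega_n$ reproduces exactly the summands of the right-hand side at level $n+1$ with $j\leq n$, while $(-1)^{n+2}\mathcal{L}_{\triangle_{\alpha_{n+1}}}\omega_n$ must supply the $j=n+1$ summands. One step there is slightly less routine than your phrase ``each commutator term is precisely one of the required contributions'' suggests: pushing $\mathcal{L}_{\triangle_{\alpha_{n+1}}}$ past $i_{\triangle_{\alpha_k}}$ produces $i_{[\triangle_{\alpha_{n+1}},\triangle_{\alpha_k}]}$ sitting inside a contraction string applied to $\{\alpha_1,\alpha_2\}$, whereas the target summand is a contraction string (which includes $i_{\triangle_{\alpha_2}}$) applied to $\{\{\alpha_k,\alpha_{n+1}\},\alpha_1\}$. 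Matching these is not pure sign bookkeeping; it uses the isotropy of $\xi$ twice, via
\begin{equation*}
i_{[\triangle_{\alpha_{n+1}},\triangle_{\alpha_k}]}\{\alpha_1,\alpha_2\}
= i_{\triangle_{\alpha_1}}\{\{\alpha_k,\alpha_{n+1}\},\alpha_2\}
= - i_{\triangle_{\alpha_2}}\{\{\alpha_k,\alpha_{n+1}\},\alpha_1\},
\end{equation*}
where the second equality relies on $\{\beta,\gamma\}=-\{\gamma,\beta\}=-i_{\triangle_\gamma}d_{DL}\beta$ for the Hamiltonian form $\beta=\{\alpha_k,\alpha_{n+1}\}$. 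With that identity (and the analogous rewriting $\mathcal{L}_{\triangle_{\alpha_{n+1}}}\{\alpha_1,\alpha_2\}=\{\{\alpha_2,\alpha_{n+1}\},\alpha_1\}-\{\{\alpha_1,\alpha_{n+1}\},\alpha_2\}$, which you state correctly) the signs do conspire as claimed, so your proposal is a complete and correct strategy; it fills a gap the paper leaves open rather than diverging from an existing proof.
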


In \cite{rogers} Rogers associate an $n$-term $L_\infty$-algebra to any $n$-plectic manifold. Later on, Zambon construct an $L_\infty$-algebra to any higher Dirac structures \cite{zambon}. Inspired from their result, we also obtain an $L_\infty$-algebra to any isotropic involutive subbundle of $(\mathbb{D}L)^p$.

\begin{thm}\label{l-inf-iso-inv}
Let $\xi \subset (\mathbb{D}L)^p$ be an isotropic involutive subbundle of $(\mathbb{D}L)^p.$ Then there is a $p$-term $L_\infty$-algebra $(\mathcal{A}, l_k)$ whose underlying chain complex $(\mathcal{A}_{p-1} \xrightarrow{l_1} \mathcal{A}_{p-2} \xrightarrow{l_1} \cdots \xrightarrow{l_1} \mathcal{A}_1 \xrightarrow{l_1} \mathcal{A}_0)$ is given by
$$\Gamma L \xrightarrow{d_{DL}} \Omega^1(DL,L) \xrightarrow{d_{DL}} \Omega^2(DL,L) \xrightarrow{d_{DL}} \cdots \cdot \xrightarrow{d_{DL}} \Omega^{p-2} (DL,L) \xrightarrow{d_{DL}} \Omega^{p-1}_{Ham}(DL,L).$$
The higher maps $l_k : \mathcal{A}^{\otimes k} \rightarrow \mathcal{A}$, for $ 2 \leq k \leq p+1$, are non-zero only on $\mathcal{A}_0^{\otimes k}$ and they are given by
$$
l_k (\alpha_1, \ldots, \alpha_k) = \kappa(k)~ i_{\triangle_{\alpha_k}} \cdots i_{\triangle_{\alpha_3}} \{\alpha_1, \alpha_2 \},$$
for $\alpha_1, \ldots, \alpha_k \in \mathcal{A}_0 = \Omega^{p-1}_{Ham}(DL,L),$ where $\kappa (k) = (-1)^{\frac{k}{2} + 1}$ if $k$ is even and $\kappa (k) = (-1)^{\frac{k-1}{2}}$ if $k$ is odd.
\end{thm}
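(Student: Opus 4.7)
The plan is to verify the four requirements of an $L_\infty$-algebra on the given complex: that $l_1$ squares to zero and every $l_k$ is well-defined, that each $l_k$ is multilinearly skew-symmetric, and that the higher Jacobi identities hold. Clearly $l_1 = d_{DL}$ squares to zero, and $d_{DL}(\Omega^{p-2}(DL,L))$ lies in $\mathcal{A}_0 = \Omega^{p-1}_{\mathrm{Ham}}(DL,L)$ because the zero derivation serves as a Hamiltonian derivation for any $d_{DL}\beta$, since $(0,d_{DL}^2\beta)=(0,0)\in\Gamma\xi$. For $k\geq 2$, the formula for $l_k$ does not depend on the choice of the various $\triangle_{a_j}$: any two choices differ by a $\delta$ with $(\delta,0)\in\Gamma\xi$, and isotropy forces $i_\delta d_{DL} a_i = \langle(\delta,0),(\triangle_{a_i},d_{DL}a_i)\rangle = 0$, so commuting $i_\delta$ past the other contractions in the formula for $l_k$ annihilates the terminal $d_{DL}a_2$. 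Involutivity of $\xi$ ensures that $\{a_1,a_2\}$ is Hamiltonian with derivation $[\triangle_{a_1},\triangle_{a_2}]$, so $l_2$ indeed maps into $\mathcal{A}_0$; more generally $l_k(a_1,\ldots,a_k)\in \Omega^{p-k+1}(DL,L) = \mathcal{A}_{k-2}$, which vanishes for $k>p+1$.

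For skew-symmetry on $\mathcal{A}_0^{\otimes k}$, I would rewrite
\[
l_k(a_1, \ldots, a_k) = \kappa(k)\, i_{\triangle_{a_k}} \cdots i_{\triangle_{a_3}} i_{\triangle_{a_1}} d_{DL} a_2.
\]
Antisymmetry in the tail indices $3,\ldots,k$ is immediate from the anticommutativity of interior products. Antisymmetry in $\{1,2\}$ is the isotropy identity $i_{\triangle_{a_1}} d_{DL} a_2 + i_{\triangle_{a_2}} d_{DL} a_1 = \langle(\triangle_{a_1},d_{DL}a_1),(\triangle_{a_2},d_{DL}a_2)\rangle = 0$. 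Mixed transpositions swapping a head index with a tail index follow by combining these two ingredients: first anticommute $i_{\triangle_{a_1}}$ through the tail, then apply the isotropy swap.

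For the higher Jacobi identity at arity $n$, first dispose of the trivial cases. Since $l_j$ is nonzero only on $\mathcal{A}_0^{\otimes j}$ and an inner $l_i$ with $i\geq 2$ outputs in $\mathcal{A}_{i-2}$, the only inner $l_i$ landing in $\mathcal{A}_0$ is $l_2$; hence if any input lies in $\mathcal{A}_{\geq 2}$ every term vanishes, while if exactly one input $a_m\in \mathcal{A}_1$ then the only potentially surviving term $l_{n-1}(d_{DL}a_m,\ldots)$ still vanishes because $d_{DL}a_m$ admits the zero derivation as Hamiltonian. In the remaining all-$\mathcal{A}_0$ case, only the $(i,j)=(2,n-1)$ and $(i,j)=(n,1)$ terms survive, leaving
\[
l_1(l_n(a_1, \ldots, a_n)) + \sum_{\sigma \in \mathrm{Sh}(2, n-2)} \mathrm{sgn}(\sigma)\, l_{n-1}\!\big(\{a_{\sigma(1)}, a_{\sigma(2)}\}, a_{\sigma(3)}, \ldots, a_{\sigma(n)}\big) = 0.
\]
Expanding each $l_k$ by the explicit formula, partitioning unshuffles by $\{\sigma(1),\sigma(2)\}=\{1,2\}$, $\{1,j\}$ with $j\geq 3$, or $\{i,j\}$ with $2\leq i<j$, and using $\mathrm{sgn}(\sigma)=(-1)^{i+j-1}$, this identity becomes precisely that of Lemma \ref{useful-lemma}, provided the constants satisfy $\kappa(n)(-1)^{n+1} = -\kappa(n-1)$, equivalently $\kappa(n) = (-1)^n\kappa(n-1)$; this follows immediately from the piecewise definition of $\kappa$.

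The main obstacle is Lemma \ref{useful-lemma} itself, which I would establish separately by induction on $n$. The base case $n=3$ is exactly the computation already performed in the lemma preceding it. For the inductive step, one starts from the vanishing $\langle[(\triangle_{a_1},d_{DL}a_1),(\triangle_{a_2},d_{DL}a_2)],(\triangle_{a_n},d_{DL}a_n)\rangle = 0$ (supplied by isotropy and involutivity), iteratively contracts with $i_{\triangle_{a_3}},\ldots,i_{\triangle_{a_{n-1}}}$, and applies at each step Cartan's identities $\mathcal{L}_\triangle = d_{DL}i_\triangle + i_\triangle d_{DL}$ and $i_{[\triangle,\nabla]} = \mathcal{L}_\triangle i_\nabla - i_\nabla \mathcal{L}_\triangle$ to convert the resulting nested derivatives into nested brackets, while tracking how the cumulative combinatorial signs assemble into the displayed right-hand side of the lemma.
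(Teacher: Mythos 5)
Your proposal is correct and follows essentially the same route as the paper: the same well-definedness and skew-symmetry observations, the same reduction of the higher Jacobi identity to the two surviving families $(i,j)=(n,1)$ and $(i,j)=(2,n-1)$, and the same final appeal to Lemma \ref{useful-lemma} (with the correct compatibility $\kappa(n)=(-1)^n\kappa(n-1)$ of the constants). You in fact supply more detail than the paper at the points it merely asserts --- the independence of the choice of Hamiltonian derivations, the mixed transpositions in the skew-symmetry check, and a plausible inductive strategy for Lemma \ref{useful-lemma}, which the paper states without proof.
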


\begin{proof}
It is easy to see that the higher maps $l_k's$ are skew-symmetric. Moreover, since the bracket $\{-,-\}$ is independent of the choice of Hamiltonian derivation, it follows that the higher maps $l_k$'s are well defined. One can easily see that the map $l_k$ has degree $k-2$, for $k \geq 1$. Thus, we only need to verify that $l_k$'s satisfy higher Jacobi identities.

The higher Jacobi identity for $n=1$ is equivalent to $l_1^2 = 0$. This is true in this case as $l_1 = d_{DL}$ is the Lie algebroid differential. To prove higher Jacobi identities for $n \geq 2$ in our case, we observe that the $k$-multilinear map $l_k$ vanishes when one of its entries have positive degree. Therefore, if $i+j= n+1$ and $j \in \{ 2, \ldots, n-2 \}$, the term
$$l_j (l_i (a_{\sigma(1)}, \ldots, a_{\sigma (i)}), a_{\sigma (i+1)}, \ldots, a_{\sigma (n)}) = 0,$$
as $l_i (a_{\sigma(1)}, \ldots, a_{\sigma (i)})$ lies in positive degree. On the other hand, if $j = n$, we have
$$l_n ( l_1 (a_{\sigma (1)}), a_{\sigma (2)}, \ldots, a_{\sigma (n)}) = 0.$$
This follows from the fact that if $a_{\sigma (1)} \in \mathcal{A}_0$, then  $l_1 (a_{\sigma (1)}) = 0$ and if $a_{\sigma (1)} \in \mathcal{A}_1$, then $l_1 (a_{\sigma (1)}) = d_{DL} (a_{\sigma (1)})$ has its Hamiltonian derivation vanish. Therefore, the only non-zero terms in the higher Jacobi identity occurs for $j = 1$ and $j = n-1$.

\medskip

{\em Case 1. ($n = 2$)} In this case, the summation in Equation (\ref{hl1-iden}) reduces to $l_1 (l_2 (a_1, a_2))$, which is zero by degree reasons.

\medskip

{\em Case 2. ($n \geq 3$)} It is enough to assume that all $\alpha_i$'s are in $\mathcal{A}_0 = \Omega^{p-1}_{Ham} (DL,L)$. The summation  in Equation (\ref{hl1-iden}) reduces to 
$$l_1 (l_n (\alpha_1, \ldots, \alpha_n)) + \sum_{\sigma \in Sh(2, n-2)} \text{sgn}(\sigma)~ \epsilon (\sigma)~ l_{n-1} (\{a_{\sigma (1)}, a_{\sigma (2)} \}, a_{\sigma (3)}, \ldots, a_{\sigma (n)}).$$
Using the explicit unshuffles and definitions of higher maps, we get the above summation as
\begin{align*}
& l_1 (l_n (\alpha_1, \ldots, \alpha_n)) + \sum_{1 \leq i < j \leq n} (-1)^{i+j-1} l_{n-1} (\{a_i, a_j\}, a_1, \ldots, \widehat{a_i}, \ldots, \widehat{a_j}, \ldots, a_n) \\
&=~ l_1 (l_n (\alpha_1, \ldots, \alpha_n)) + l_{n-1} (\{\alpha_1, \alpha_2 \}, \alpha_3, \ldots, \alpha_n) \\
&+ \sum_{3 \leq j \leq n} (-1)^j ~l_{n-1} (\{a_1, a_j\}, a_2, \ldots, \widehat{a_j}, \ldots, a_n) \\
&+ \sum_{2 \leq i < j \leq n} (-1)^{i+j-1} ~l_{n-1} (\{a_i, a_j\}, a_1, \ldots, \widehat{a_i}, \ldots, \widehat{a_j}, \ldots, a_n) \\
&= \kappa(n)~ d_{DL} (i_{\triangle_{\alpha_n}} \cdots i_{\triangle_{\alpha_3}} \{\alpha_1, \alpha_2 \})
+ \kappa(n-1) ~\bigg[ i_{\triangle_{\alpha_n}} \cdots i_{\triangle_{\alpha_4}} \{\{\alpha_1, \alpha_2\}, \alpha_3\} \\
& + \sum_{3 \leq j \leq n} (-1)^j~ i_{\triangle_{\alpha_n}} \cdots \widehat{i_{\triangle_{\alpha_j}}} \cdots i_{\triangle_{\alpha_3}} \{\{a_1, a_j\}, a_2 \} \\
& + \sum_{2 \leq i < j \leq n} (-1)^{i+j-1} i_{\triangle_{\alpha_n}} \cdots \widehat{i_{\triangle_{\alpha_j}}} \cdots \widehat{i_{\triangle_{\alpha_i}}} \cdots i_{\triangle_{\alpha_2}} \{\{a_i, a_j\}, a_1 \} \bigg].
 \end{align*}
This term vanishes from Lemma \ref{useful-lemma}. Hence the higher Jacobi identities hold.
\end{proof}

The above $p$-term $L_\infty$-algebra is called the algebra of observables associated to the isotropic involutive subbundle $\xi$.

\begin{remark}
Let $B \in \Omega^2_{\text{cl}} (DL,L)$ be a closed Atiyah $2$-form. Then
$$\Phi_B : DL \oplus J^1L \rightarrow DL \oplus J^1L, ~ (\triangle, \alpha) \mapsto (\triangle, \alpha + i_\triangle B)$$
defines an automorphism of the $L$-Courant algebroid $\mathbb{D}L = DL \oplus J^1L$. This is called the gauge transformation by $B$. Therefore, it acts on the set of all Dirac-Jacobi structures on $L$. However, the Lie algebra of observables of the corresponding Dirac-Jacobi structures are not isomorphic (unless $B = 0$).
A similar situation also holds for higher dimensions. Namely, if ${\bf B} \in \Omega^{p+1}_{\text{cl}} (DL,L)$ is a closed Atiyah $(p+1)$-form on $L$, then the gauge transformation
$$\Phi_{\bf B} : (\mathbb{D}L)^p  \rightarrow (\mathbb{D}L)^p, ~ (\triangle, \alpha) \mapsto (\triangle, \alpha + i_\triangle {\bf B})$$
acts on the set of isotropic involutive subbundles of $(\mathbb{D}L)^p$. However, it does'nt induce an isomorphism on the corresponding algebra of observables.
\end{remark}

In the next, we define a transformation of isotropic involutive subbundles of $(\mathbb{D}L)^p$ which induces an isomorphism on the corresponding algebra of observables.
\begin{lemma}
For any $\lambda \in \mathbb{R} \setminus \{0\}$, we define a map $m_\lambda : (\mathbb{D}L)^p \rightarrow (\mathbb{D}L)^p$ by
$$m_\lambda (\triangle, \alpha) = (\triangle, \lambda \alpha).$$
If $\xi \subset (\mathbb{D}L)^p$ is an isotropic involutive subbundle of $(\mathbb{D}L)^p$, then $m_\lambda (\xi)$ is also an isotropic involutive subbundle of $(\mathbb{D}L)^p$. Moreover, the corresponding $p$-term $L_\infty$-algebras as in Theorem \ref{l-inf-iso-inv} are strictly isomorphic.
\end{lemma}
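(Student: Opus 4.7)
The plan is to dispatch both claims by direct computation, exploiting that the $(\wedge^{p-1}(DL)^*\otimes L)$-valued pairing and the higher Dorfman-Jacobi bracket on $(\mathbb{D}L)^p$ are both $\mathbb{R}$-linear in the second component $\alpha\in\wedge^p(DL)^*\otimes L$. First, for isotropy and involutivity of $m_\lambda(\xi)$, I would take sections $(\triangle,\alpha),(\nabla,\beta)\in\Gamma\xi$ and simply observe that
$$\langle(\triangle,\lambda\alpha),(\nabla,\lambda\beta)\rangle=\lambda\,\langle(\triangle,\alpha),(\nabla,\beta)\rangle=0,$$
$$[(\triangle,\lambda\alpha),(\nabla,\lambda\beta)]=\bigl([\triangle,\nabla],\,\lambda(\mathcal{L}_\triangle\beta-i_\nabla d_{DL}\alpha)\bigr)=m_\lambda\bigl([(\triangle,\alpha),(\nabla,\beta)]\bigr),$$
so $m_\lambda(\xi)$ inherits both properties from $\xi$.

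Next I would compare the Hamiltonian data of the two subbundles. Since $d_{DL}$ is $\mathbb{R}$-linear, $(\triangle,d_{DL}\alpha)\in\Gamma m_\lambda(\xi)$ amounts to $(\triangle,d_{DL}(\lambda^{-1}\alpha))\in\Gamma\xi$; as the $\xi$-Hamiltonian Atiyah forms are stable under rescaling, the sets $\Omega^{p-1}_{\text{Ham}}(DL,L)$ attached to $\xi$ and to $m_\lambda(\xi)$ coincide, and a Hamiltonian derivation for $\alpha$ with respect to $m_\lambda(\xi)$ may be chosen as $\triangle_\alpha^{m_\lambda(\xi)}=\lambda^{-1}\triangle_\alpha^{\xi}$. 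Consequently the bracket transforms as $\{\alpha,\beta\}_{m_\lambda(\xi)}=\lambda^{-1}\{\alpha,\beta\}_\xi$, and unwinding the formula of Theorem~\ref{l-inf-iso-inv} one finds, for $k\geq 2$,
$$l_k^{m_\lambda(\xi)}(\alpha_1,\ldots,\alpha_k)=\lambda^{-(k-1)}\,l_k^\xi(\alpha_1,\ldots,\alpha_k),$$
with one factor $\lambda^{-1}$ coming from the bracket and $k-2$ factors from the interior contractions; the differential $l_1=d_{DL}$ is the same on both sides.

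Finally I would exhibit the strict isomorphism by uniform dilation: set $\phi_i(\omega):=\lambda\omega$ on each graded piece of the complex~(\ref{first-eqn}). Then $\phi=(\phi_i)$ is a chain map by $\mathbb{R}$-linearity of $d_{DL}$, and each $\phi_i$ is bijective since $\lambda\neq 0$. Using the $k$-multilinearity of $l_k^{m_\lambda(\xi)}$ (valid because one may choose $\triangle_{c\alpha}=c\triangle_\alpha$), the $L_\infty$-compatibility reduces to
$$l_k^{m_\lambda(\xi)}(\phi_0\alpha_1,\ldots,\phi_0\alpha_k)=\lambda^k\cdot\lambda^{-(k-1)}\,l_k^\xi(\alpha_1,\ldots,\alpha_k)=\lambda\,l_k^\xi(\alpha_1,\ldots,\alpha_k)=\phi_0\bigl(l_k^\xi(\alpha_1,\ldots,\alpha_k)\bigr),$$
as required. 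The one delicate point is the bookkeeping of $\lambda$-exponents: the $-(k-1)$ coming from the rescaled $l_k^{m_\lambda(\xi)}$ and the $+k$ from the multilinearity of $\phi_0$ must conspire to leave exactly one factor $\lambda$, which forces the dilation factor to be \emph{uniform} across all degrees rather than degree-dependent. Once this matches for every $k$ simultaneously, the lemma follows.
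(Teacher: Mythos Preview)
Your proof is correct and follows essentially the same approach as the paper: the isotropy and involutivity checks are identical, and the strict isomorphism is given in both cases by uniform multiplication by $\lambda$ in every degree. The only cosmetic difference is bookkeeping: the paper observes directly that $\lambda\alpha$ is $m_\lambda(\xi)$-Hamiltonian with the \emph{same} derivation $\triangle_\alpha^\xi$, whence $\{\lambda\alpha,\lambda\beta\}_{m_\lambda(\xi)}=\lambda\{\alpha,\beta\}_\xi$ in one step, whereas you first record the intermediate scaling $l_k^{m_\lambda(\xi)}=\lambda^{-(k-1)}l_k^\xi$ and then cancel it against the $\lambda^k$ from multilinearity; your route is slightly longer but more explicit for general $k$.
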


\begin{proof}
For $(\triangle, \alpha), (\nabla, \beta) \in \Gamma \xi$, we have
$$\langle m_\lambda (\triangle, \alpha), m_\lambda (\nabla, \beta) \rangle = \lambda \langle (\triangle, \alpha),(\nabla, \beta) \rangle = 0.$$
Moreover, one can easily verify that
$$ [m_\lambda (\triangle, \alpha), m_\lambda (\nabla, \beta)] = m_\lambda [(\triangle, \alpha), (\nabla, \beta)].$$
This shows that $m_\lambda (\xi)$ is an isotropic involutive subbundle of $(\mathbb{D}L)^p$.

To prove the last part, let $\Omega^{p-1}_{Ham, \xi} (DL,L)$ and $\Omega^{p-1}_{Ham, m_\lambda(\xi)} (DL,L)$ denote the space of corresponding Hamiltonian Atiyah $(p-1)$-forms. Observe that, if $\alpha \in \Omega^{p-1}_{Ham, \xi} (DL,L)$ with a Hamiltonian derivation $\triangle_\alpha$, then $\lambda \alpha \in \Omega^{p-1}_{Ham, m_\lambda(\xi)} (DL,L)$ with the same $\triangle_\alpha$ as a Hamiltonian derivation. Moreover, for $\alpha, \beta \in  \Omega^{p-1}_{Ham, \xi} (DL,L)$,
$$\{ \lambda \alpha, \lambda \beta \}_{m_\lambda (\xi)} = i_{\triangle_\alpha} d_{DL} (\lambda \beta) = \lambda \{ \alpha, \beta \}_\xi.$$
Define a degree zero map $f$ between the corresponding $L_\infty$-algebras by the multiplication of $\lambda$. Then it is easy to see that $f$ defines a strict isomorphism between them.
\end{proof}

\subsection{Relation between two $L_\infty$-algebras}

Let $\omega \in \Omega^3_{\text{cl}} (DL,L)$ be a closed Atiyah $3$-form on $L$. Then
$$\text{Gr}(\omega) = \{ (\triangle, i_\triangle \omega)|~ \triangle \in DL \} \subset (\mathbb{D}L)^2$$
is an isotropic involutive subbundle of $(\mathbb{D}L)^2$. Hence it follows from Theorem \ref{l-inf-iso-inv} that the complex $\Gamma L \xrightarrow{d_{DL}} \Omega^1_{\text{Ham}} (DL,L)$ carries a $2$-term $L_\infty$-algebra structure, where $\Omega^1_{\text{Ham}} (DL,L)$ is the set of all $\alpha \in \Omega^1 (DL,L)$ for which there exists a derivation $\triangle_\alpha \in \Gamma (DL)$ with $(\triangle_\alpha, d_{DL} \alpha) \in \Gamma (\text{Gr}(\omega)).$

On the other hand, given a closed Atiyah $3$-form $\omega \in \Omega^3_{\text{cl}} (DL,L)$, we have the $L$-Courant algebroid $(\mathbb{D}L)_\omega$ as in Example \ref{twisted-omni}. Hence, by Theorem \ref{new-2-term}, there is a $2$-term $L_\infty$-algebra on the complex $\Gamma L \xrightarrow{\mathcal{D} = d_{DL}} \Gamma (\mathbb{D}L)$. In the following, we show that when $\omega$ is non-degenerate, the $2$-term $L_\infty$-algebra $\Gamma L \xrightarrow{d_{DL}} \Omega^1_{\text{Ham}} (DL,L)$ naturally embedds into $\Gamma L \xrightarrow{\mathcal{D} = d_{DL}} \Gamma (\mathbb{D}L)$.

We first need the following observations.

\begin{itemize}
\item For $\alpha, \beta \in \Omega^1_{Ham} (DL,L)$ with Hamiltonian derivations $\triangle_\alpha$ and $\triangle_\beta$, respectively, we have
\begin{align}
\mathcal{L}_{\triangle_\alpha} \beta =~& \{ \alpha, \beta \} + d_{DL} i_{\triangle_\alpha} \beta, \label{l-bracket}\\
\mathcal{L}_{\triangle_\alpha} \beta - \mathcal{L}_{\triangle_\beta} \alpha =~& 2 (\{ \alpha, \beta \} + d_{DL} B(\alpha, \beta)), \label{l-minus-l}
\end{align}
where $B (\alpha, \beta) = \frac{1}{2} (i_{\triangle_\alpha} \beta - i_{\triangle_\beta} \alpha).$ The first identity follows from the definition of the Lie derivative  and the second identity follows from the first one.
\item When $\omega$ is non-degenerate, the bracket of Hamiltonian Atiyah $1$-forms are given by
\begin{align}\label{w-bracket}
\{ \alpha, \beta \} = i_{\triangle_\alpha} d_{DL} \beta = i_{\triangle_\alpha} i_{\triangle_\beta} \omega,
\end{align}
as $(\triangle_\beta, d_{DL} \beta) \in \text{Gr}(\omega)$, where $\triangle_\alpha$ and $\triangle_\beta$ are unique Hamiltonian derivations associated to $\alpha$ and $\beta$, respectively.
\end{itemize}

Moreover, we need the following lemma which will ease some of the next computations.

\begin{lemma}\label{some-lemma-cp}
For any $\alpha, \beta, \gamma \in \Omega^1_{Ham} (DL,L)$ with Hamiltonian derivations $\triangle_\alpha, \triangle_\beta$ and $\triangle_\gamma$, respectively, we have
\begin{align*}
i_{[\triangle_\alpha, \triangle_\beta]} \gamma + c.p. = 3~ i_{\triangle_\alpha} i_{\triangle_\beta} i_{\triangle_\gamma} \omega + 2 \big[   i_{\triangle_\alpha} d_{DL} B (\beta, \gamma) + c.p. \big].
\end{align*}
\end{lemma}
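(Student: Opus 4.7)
The plan is to unfold $i_{[\triangle_\alpha,\triangle_\beta]}\gamma$ via the standard commutator identity $[\mathcal{L}_\triangle, i_\nabla] = i_{[\triangle,\nabla]}$ in the Atiyah complex, and then rewrite the resulting Lie derivatives using identities (\ref{l-bracket}) and (\ref{w-bracket}). First I would write
\[
i_{[\triangle_\alpha,\triangle_\beta]}\gamma \;=\; \mathcal{L}_{\triangle_\alpha}(i_{\triangle_\beta}\gamma) \;-\; i_{\triangle_\beta}\,\mathcal{L}_{\triangle_\alpha}\gamma.
\]
The first term is the action of $\triangle_\alpha$ on the section $i_{\triangle_\beta}\gamma\in\Gamma L$, which via Cartan's formula on degree zero equals $i_{\triangle_\alpha}\,d_{DL}(i_{\triangle_\beta}\gamma)$. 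Into the second term I would substitute $\mathcal{L}_{\triangle_\alpha}\gamma = \{\alpha,\gamma\} + d_{DL}(i_{\triangle_\alpha}\gamma)$ from (\ref{l-bracket}) and then eliminate the bracket via $\{\alpha,\gamma\} = i_{\triangle_\alpha}i_{\triangle_\gamma}\omega$, which is exactly (\ref{w-bracket}).

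Combining the two pieces and using the skew-symmetry $i_{\triangle_\beta}i_{\triangle_\alpha}i_{\triangle_\gamma}\omega = -\,i_{\triangle_\alpha}i_{\triangle_\beta}i_{\triangle_\gamma}\omega$ should produce the per-triple identity
\[
i_{[\triangle_\alpha,\triangle_\beta]}\gamma \;=\; i_{\triangle_\alpha}i_{\triangle_\beta}i_{\triangle_\gamma}\omega \;+\; i_{\triangle_\alpha}\,d_{DL}(i_{\triangle_\beta}\gamma) \;-\; i_{\triangle_\beta}\,d_{DL}(i_{\triangle_\alpha}\gamma).
\]
Summing over the three cyclic permutations of $(\alpha,\beta,\gamma)$ will then finish the argument: the triple contraction is cyclically invariant (two adjacent transpositions of interior products), and thus contributes $3\,i_{\triangle_\alpha}i_{\triangle_\beta}i_{\triangle_\gamma}\omega$, while the six residual $d_{DL}$-terms regroup by their outermost $i_\triangle$ into expressions of the form $i_{\triangle_\alpha}\,d_{DL}\bigl(i_{\triangle_\beta}\gamma - i_{\triangle_\gamma}\beta\bigr)$, and using $2B(\beta,\gamma) = i_{\triangle_\beta}\gamma - i_{\triangle_\gamma}\beta$ these combine into $2\bigl[i_{\triangle_\alpha}\,d_{DL}B(\beta,\gamma)+ c.p.\bigr]$.

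This is essentially a bookkeeping exercise; the only mild pitfall will be tracking the signs when commuting interior products and recognizing the antisymmetric pairings $i_{\triangle_\beta}\gamma - i_{\triangle_\gamma}\beta$ as $2B(\beta,\gamma)$ after the cyclic summation. Since the three essential ingredients, namely the bracket-interior commutator, identity (\ref{l-bracket}), and the non-degeneracy rewrite (\ref{w-bracket}), are all already available, I anticipate no deeper obstacle.
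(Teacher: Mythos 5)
Your proposal is correct and follows essentially the same route as the paper's proof: expand $i_{[\triangle_\alpha,\triangle_\beta]}\gamma$ via the commutator $[\mathcal{L}_{\triangle_\alpha},i_{\triangle_\beta}]$, substitute identities (\ref{l-bracket}) and (\ref{w-bracket}), and take the cyclic sum, recognizing $i_{\triangle_\beta}\gamma - i_{\triangle_\gamma}\beta = 2B(\beta,\gamma)$. The per-triple identity you predict agrees (after an even permutation of interior products) with the one obtained in the paper, so there is nothing to add.
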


\begin{proof}
We have
\begin{align*}
i_{[\triangle_\alpha, \triangle_\beta]} \gamma =~& \mathcal{L}_{\triangle_\alpha} i_{\triangle_\beta} \gamma - i_{\triangle_\beta} \mathcal{L}_{\triangle_\alpha} \gamma \\
=~& i_{\triangle_\alpha} d_{DL} i_{\triangle_\beta} \gamma - i_{\triangle_\beta} (\{\alpha, \gamma \} + d_{DL} i_{\triangle_\alpha} \gamma )  \qquad \text{(by (\ref{l-bracket}))}\\
=~& i_{\triangle_\alpha} d_{DL} i_{\triangle_\beta} \gamma + i_{\triangle_\beta} i_{\triangle_\gamma} i_{\triangle_\alpha} \omega - i_{\triangle_\beta} d_{DL} i_{\triangle_\alpha} \gamma.   \qquad \text{(by (\ref{w-bracket}))}
\end{align*}
Hence,
\begin{align*}
i_{[\triangle_\alpha, \triangle_\beta]} \gamma + c.p. =~& 3~ i_{\triangle_\alpha} i_{\triangle_\beta} i_{\triangle_\gamma} \omega +~ \big[ i_{\triangle_\alpha} d_{DL} (i_{\triangle_\beta} \gamma - i_{\triangle_\gamma} \beta) + c.p.   \big] \\
=~& 3~ i_{\triangle_\alpha} i_{\triangle_\beta} i_{\triangle_\gamma} \omega +~ 2 \big[ i_{\triangle_\alpha} d_{DL} B (\beta, \gamma) + c.p. \big].
\end{align*}
\end{proof}

\begin{thm}\label{inj-morphism}
Let $\omega \in \Omega_{cl}^3 (DL, L)$ be a closed non-degenerate Atiyah $3$-form on $L$. Then there is an injective morphism
$$\phi : (\Gamma L \xrightarrow{d_{DL}} \Omega^1_{\text{Ham}} (DL,L))~ \rightsquigarrow ~ (\Gamma L \xrightarrow{d_{DL}} \Gamma( \mathbb{D}L))$$
between $2$-term $L_\infty$-algebras.
\end{thm}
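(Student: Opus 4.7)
The plan is to define the morphism $\phi = (\phi_0, \phi_1, \phi_2)$ explicitly and then verify each of the three conditions of Definition \ref{l-inf-map}. Since $\omega$ is non-degenerate, every $\alpha \in \Omega^1_{\text{Ham}}(DL,L)$ has a unique Hamiltonian derivation $\triangle_\alpha \in \Gamma(DL)$ characterized by $i_{\triangle_\alpha}\omega = d_{DL}\alpha$. Using this, I would set
\begin{align*}
\phi_0(\alpha) &:= (\triangle_\alpha, -\alpha) \in \Gamma(\mathbb{D}L), \qquad \phi_1(s) := -s \in \Gamma L, \\
\phi_2(\alpha,\beta) &:= -B(\alpha,\beta) = -\tfrac{1}{2}\bigl(i_{\triangle_\alpha}\beta - i_{\triangle_\beta}\alpha\bigr).
\end{align*}
Injectivity is immediate since $\phi_1$ is an isomorphism and the second component of $\phi_0(\alpha)$ recovers $\alpha$. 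The sign choices are pinned down by the requirement that the residual term in $\llbracket\phi_0\alpha,\phi_0\beta\rrbracket_\omega - \phi_0(l_2(\alpha,\beta))$ be $d_{DL}$-exact.

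First I would check that $\phi$ is a chain map. Since $d_{DL}^2 = 0$ and $\omega$ is non-degenerate, $\triangle_{d_{DL}s} = 0$, hence $\phi_0(d_{DL}s) = (0,-d_{DL}s) = \mathcal{D}\phi_1(s)$. For the first $l_2$-axiom I would apply the skew bracket formula of Example \ref{twisted-omni} to $(\phi_0\alpha, \phi_0\beta)$ and simplify using (\ref{l-minus-l}) and (\ref{w-bracket}); a direct computation yields
$$\llbracket \phi_0\alpha, \phi_0\beta\rrbracket_\omega = \bigl([\triangle_\alpha,\triangle_\beta],\, -\{\alpha,\beta\} - d_{DL}B(\alpha,\beta)\bigr).$$
Because $\triangle_{\{\alpha,\beta\}} = [\triangle_\alpha,\triangle_\beta]$ (observed in the paragraph preceding Lemma \ref{useful-lemma}), subtracting $\phi_0(l_2(\alpha,\beta)) = ([\triangle_\alpha,\triangle_\beta],\,-\{\alpha,\beta\})$ leaves precisely $\mathcal{D}\phi_2(\alpha,\beta) = (0,\,-d_{DL}B(\alpha,\beta))$. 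The mixed $l_2$-axiom follows from a short pairing calculation: since $l_2(\alpha,h) = 0$ in the source and $\triangle_{d_{DL}h} = 0$, both sides collapse to $-\tfrac{1}{2}\triangle_\alpha(h)$.

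The $l_3$-axiom is where the real work lies. Pairing $\llbracket\phi_0\alpha,\phi_0\beta\rrbracket_\omega$ with $\phi_0\gamma$ and summing cyclically produces two kinds of terms: the cyclic sum $i_{\triangle_\gamma}\{\alpha,\beta\} + c.p.$, which telescopes to $3\,i_{\triangle_\alpha}i_{\triangle_\beta}i_{\triangle_\gamma}\omega$ by skew-symmetry of $\omega$; and the cyclic sum $i_{[\triangle_\alpha,\triangle_\beta]}\gamma + c.p.$, which is rewritten by Lemma \ref{some-lemma-cp} into $3\,i_{\triangle_\alpha}i_{\triangle_\beta}i_{\triangle_\gamma}\omega + 2(i_{\triangle_\alpha}d_{DL}B(\beta,\gamma) + c.p.)$. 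On the other side, $\phi_2(\alpha, l_2(\beta,\gamma)) + c.p. = -B(\alpha,\{\beta,\gamma\}) + c.p.$ expands using $\triangle_{\{\beta,\gamma\}} = [\triangle_\beta,\triangle_\gamma]$ and invokes Lemma \ref{some-lemma-cp} once more for the $i_{[\triangle_\beta,\triangle_\gamma]}\alpha + c.p.$ piece, while $l_2'(\phi_0\alpha,\phi_2(\beta,\gamma)) + c.p.$ is a direct pairing against $\mathcal{D}(-B(\beta,\gamma))$. Combined with $\phi_1 l_3(\alpha,\beta,\gamma) = i_{\triangle_\alpha}i_{\triangle_\beta}i_{\triangle_\gamma}\omega$, all triple-$\omega$ contributions cancel between the two sides, leaving only matching cyclic sums $i_{\triangle_\alpha}d_{DL}B(\beta,\gamma) + c.p.$.

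The main obstacle is the combinatorial bookkeeping of the $l_3$-verification: three distinct cyclic sums involving $\omega$ and $B$ must be arranged so that their cubic-in-$\omega$ contributions align and cancel, leaving only the $i_\triangle d_{DL}B$ residues. Lemma \ref{some-lemma-cp} is the decisive identity that makes this closure possible; apart from it, each step is a routine expansion.
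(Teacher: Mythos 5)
Your proposal is correct and coincides with the paper's own proof in every essential respect: the same maps $\phi_0(\alpha)=(\triangle_\alpha,-\alpha)$, $\phi_1(s)=-s$, $\phi_2(\alpha,\beta)=-\tfrac12(i_{\triangle_\alpha}\beta-i_{\triangle_\beta}\alpha)$, the same use of identities (\ref{l-minus-l}) and (\ref{w-bracket}) for the $l_2$-axioms, and the same reliance on Lemma \ref{some-lemma-cp} to close the $l_3$-verification.
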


\begin{proof}
Let $\mathcal{A} = (\Gamma L \xrightarrow{d_{DL}} \Omega^1_{\text{Ham}} (DL,L))$ and $\mathcal{A}' = (\Gamma L \xrightarrow{d_{DL}} \Gamma( \mathbb{D}L))$ be the $2$-term $L_\infty$-algebras. Since $\omega \in \Omega^3_{\text{cl}} (DL,L)$ is non-degenerate, the map
$$\omega_\flat : DL \rightarrow \wedge^2 (DL)^* \otimes L, ~ \triangle \mapsto i_\triangle \omega$$
is injective. Define $\phi_0 : \mathcal{A}_0 \rightarrow \mathcal{A}_0'$ by $\phi_0 (\alpha) = (\triangle_\alpha, - \alpha)$, for $\alpha \in \mathcal{A}_0 = \Omega^1_{\text{Ham}} (DL,L),$ where $\triangle_\alpha \in \Gamma (DL)$ is the Hamiltonian derivation corresponding to $\alpha$. The map $\phi_1 : \mathcal{A}_1 \rightarrow \mathcal{A}_1'$ is given by $\phi_1 (s)= - s$, for $s \in \mathcal{A}_1 = \Gamma L.$ Clearly, the map $\phi_0$ and $\phi_1$ are injective. Finally, the map $\phi_2 : \mathcal{A}_0 \times \mathcal{A}_0 \rightarrow \mathcal{A}_1'$ is given by
$$ \phi_2 (\alpha, \beta) = - \frac{1}{2} (\beta (\triangle_\alpha) - \alpha (\triangle_\beta)),~~ \text{ for } \alpha, \beta \in \mathcal{A}_0 = \Omega^1_{\text{Ham}} (DL,L).$$
It is easy to see that $\phi = (\phi_0, \phi_1) : \mathcal{A} \rightarrow \mathcal{A}'$ defines a chain map and the map $\phi_2$ is skew-symmetric. For $\alpha, \beta \in \mathcal{A}_0 = \Omega^1_{\text{Ham}} (DL,L)$,
\begin{align*}
l_2' (\phi_0 (\alpha), \phi_0 (\beta)) =~& \llbracket (\triangle_\alpha, - \alpha), (\triangle_\beta, - \beta) \rrbracket_\omega \\
=~& ( [\triangle_\alpha, \triangle_\beta],~ - \mathcal{L}_{\triangle_\alpha} \beta + \mathcal{L}_{\triangle_\beta} \alpha + \frac{1}{2} d_{DL} (\beta (\triangle_\alpha) - \alpha (\triangle_\beta)) - i_{\triangle_\beta} i_{\triangle_\alpha} \omega ) \\
=~& ( [\triangle_\alpha, \triangle_\beta],~ - 2 \{\alpha, \beta \} - 2 d_{DL} B (\alpha, \beta) + d_{DL} B (\alpha, \beta) + \{ \alpha, \beta \}) \qquad  \text{(by ~~~(\ref{l-minus-l}))}\\
=~& ([\triangle_\alpha, \triangle_\beta],~ - \{\alpha, \beta \} + d_{DL} \phi_2 (\alpha, \beta)).
\end{align*}
Therefore,
\begin{align*}
l_2' (\phi_0 (\alpha), \phi_0 (\beta)) - \phi_0 (l_2 (\alpha, \beta)) =~& ([\triangle_\alpha, \triangle_\beta],~ - \{\alpha, \beta \} + d_{DL} \phi_2 (\alpha, \beta)) - ([\triangle_\alpha, \triangle_\beta], - \{\alpha, \beta \}) \\
=~& (0, d_{DL} \phi_2 (\alpha, \beta)) = l_1' (\phi_2 (\alpha, \beta)).
\end{align*}
Similarly, for $\alpha \in \mathcal{A}_0 = \Omega^1_{\text{Ham}} (DL,L)$ and $s \in \mathcal{A}_1 = \Gamma L$,
\begin{align*}
l_2' (\phi_0 (\alpha) , \phi_1 (s)) - \phi_1 l_2 (\alpha, s) =~& l_2' ((\triangle_\alpha, - \alpha), -s) \\
=~& \frac{1}{2} \langle (\triangle_\alpha, - \alpha), (0, -d_{DL} s ) \rangle \\
=~& - \frac{1}{2} (d_{DL}s) (\triangle_\alpha) = \phi_2 (\alpha, d_{DL} s) = \phi_2 (\alpha, l_1 s).
\end{align*}
Thus, it remains to show the last condition of Definition \ref{l-inf-map}. For any $\alpha, \beta, \gamma \in \mathcal{A}_0 = \Omega^1_{\text{Ham}} (DL,L),$
\begin{align*}
l_3' (\phi_0 (\alpha), \phi_0 (\beta) , \phi_0 (\gamma)) =~& - T ((\triangle_\alpha, - \alpha), (\triangle_\beta, - \beta), (\triangle_\gamma, - \gamma)) \\
=~& - \frac{1}{6} \langle \llbracket (\triangle_\alpha, - \alpha),   (\triangle_\beta, - \beta) \rrbracket_\omega, (\triangle_\gamma, - \gamma)  \rangle + c.p. \\
=~& - \frac{1}{6} \langle [\triangle_\alpha, \triangle_\beta],~ - \{\alpha, \beta \} - d_{DL} B(\alpha, \beta) ), (\triangle_\gamma, - \gamma)  \rangle + c.p.\\
=~& \frac{1}{6} \big(  i_{[\triangle_\alpha, \triangle_\beta]} \gamma + i_{\triangle_\gamma} i_{\triangle_\alpha} i_{\triangle_\beta} \omega + i_{\triangle_\gamma} d_{DL} B(\alpha, \beta) \big) + c.p. \\
=~& i_{\triangle_\gamma} i_{\triangle_\alpha} i_{\triangle_\beta} \omega + \frac{1}{2} \big(  i_{\triangle_\gamma} d_{DL} B(\alpha, \beta)  + c.p.  \big)\\
 =~&  - l_3 (\alpha, \beta, \gamma) + \frac{1}{2} \big(   i_{\triangle_\gamma} d_{DL} B(\alpha, \beta)  + c.p.   \big) \\
=~& \phi_1 ( l_3 (\alpha, \beta, \gamma)) + \frac{1}{2} \big(   i_{\triangle_\gamma} d_{DL} B(\alpha, \beta)   + c.p.   \big).
\end{align*}
Moreover,
\begin{align*}
S := \phi_2 (\alpha, l_2 (\beta, \gamma)) + c.p. =~& \phi_2 (\alpha, \{ \beta, \gamma \}) + c.p.\\
=~& - \frac{1}{2} (i_{\triangle_\alpha} \{ \beta, \gamma\} - i_{[\triangle_\beta, \triangle_\gamma]} \alpha) + c.p. \\
=~& - \frac{1}{2} (i_{\triangle_\alpha} i_{\triangle_\beta} i_{\triangle_\gamma} \omega - i_{[\triangle_\beta, \triangle_\gamma]} \alpha) + c.p. \\
=~&  - \frac{1}{2} \big[ 3 i_{\triangle_\alpha} i_{\triangle_\beta} i_{\triangle_\gamma} \omega - 3 i_{\triangle_\alpha} i_{\triangle_\beta} i_{\triangle_\gamma} \omega - 2 [ i_{\triangle_\alpha} d_{DL} B (\beta, \gamma) + c.p. ]    \big] \\
& \qquad \qquad \qquad \qquad \qquad \qquad \text{(by Lemma \ref{some-lemma-cp})} \\
=~& i_{\triangle_\alpha} d_{DL} B (\beta, \gamma) + c.p. 
\end{align*}
and
\begin{align*}
T := l_2' (\phi_0 (\alpha), \phi_2 (\beta, \gamma)) + c.p. =~& l_2' (  (\triangle_\alpha, - \alpha), - B (\beta, \gamma)) + c.p. \\
=~& \frac{1}{2} \langle (\triangle_\alpha, - \alpha), (0, - d_{DL} B (\beta, \gamma)) + c.p. \\
=~& - \frac{1}{2} \big[ i_{\triangle_\alpha} d_{DL} B (\beta, \gamma) + c.p. \big]. 
\end{align*}
Therefore,
\begin{align*}
l_3' (\phi_0 (\alpha), \phi_0 (\beta) , \phi_0 (\gamma)) - \phi_1 (l_3 (\alpha, \beta, \gamma))
= \frac{1}{2} \big[ i_{\triangle_\alpha} d_{DL} B (\beta, \gamma) + c.p. \big] = S + T.
\end{align*}
Hence the proof.
\end{proof}

\subsection{dg Leibniz algebras}

In the previous subsection, we associate an $L_\infty$-algebra to any isotropic involutive subbundle of $(\mathbb{D}L)^p$. For a closed Atiyah $(p+1)$-form $\omega \in \Omega^{p+1}_{cl} (DL,L)$ on $L$, the graph $\text{Gr}(\omega) \subset (\mathbb{D}L)^p$
defines a isotropic involutive subbundle. Hence, by Theorem \ref{l-inf-iso-inv} there is a $p$-term $L_\infty$-algebra.

In this section, we show that if $\omega$ is non-degenerate, then there is also a dg (differential graded) Leibniz algebra structure on the same chain complex.
\begin{defn}
A dg Leibniz algebra is a graded vector space $\mathcal{A} = \oplus \mathcal{A}_i$ together with a degree $-1$  differential $\delta$ and a degree $0$ bracket $[-,-] : \mathcal{A} \otimes \mathcal{A} \rightarrow \mathcal{A}$ satisfying
\begin{itemize}
\item (derivation rule) $\delta [a,b] = [\delta a , b] + (-1)^{|a|} [a, \delta b],$
\item (graded Leibniz identity) $[a,[b,c]] = [[a,b],c] + (-1)^{|a||b|} [ b, [a,c]],$
\end{itemize}
for $a,b, c \in \mathcal{A}.$
\end{defn}

\begin{thm}\label{dg-leibniz-thm}
Let $\omega \in \Omega^{p+1}_{cl} (DL,L)$ be a closed non-degenerate Atiyah $(p+1)$-form on $L$. Then there is a dg Leibniz algebra whose underlying chain complex is given by
$$\Gamma L \xrightarrow{d_{DL}} \Omega^1(DL,L) \xrightarrow{d_{DL}} \cdots \cdot \xrightarrow{d_{DL}} \Omega^{p-2}(DL,L) \xrightarrow{d_{DL}} \Omega^{p-1}_{\text{Ham}} (DL,L)$$
and the bracket is given by
\[
[\alpha, \beta ]  = \begin{cases} \mathcal{L}_{\triangle_\alpha} \beta   ~~~~~~  & \mbox{if } \alpha \in \mathcal{A}_0 =  \Omega^{p-1}_{\text{Ham}} (DL,L),\\
0 ~~~~~ & \text{otherwise}
\end{cases}
\]
where $\triangle_\alpha$ is a Hamiltonian derivation associated to $\alpha$.
\end{thm}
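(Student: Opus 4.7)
The plan is to check three things: (i) well-definedness and grading, (ii) the derivation rule, and (iii) the graded Leibniz identity. For (i), I would first use non-degeneracy of $\omega$ to deduce that the Hamiltonian derivation $\triangle_\alpha$ is \emph{uniquely} determined by $\alpha \in \Omega^{p-1}_{\mathrm{Ham}}(DL,L)$: any $(\triangle,0) \in \mathrm{Gr}(\omega)$ forces $i_\triangle\omega = 0$, hence $\triangle = 0$. Thus $[\alpha,\beta] = \mathcal{L}_{\triangle_\alpha}\beta$ is unambiguous, and since $\mathcal{L}_\triangle$ preserves Atiyah-form degree, $[\alpha,\beta]$ lies in $\mathcal{A}_k$ whenever $\beta \in \mathcal{A}_k$ with $k \geq 1$. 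The delicate case is $\alpha,\beta \in \mathcal{A}_0$, where I must show that $\mathcal{L}_{\triangle_\alpha}\beta$ remains in $\Omega^{p-1}_{\mathrm{Ham}}(DL,L)$ and identify its Hamiltonian derivation. Using Cartan's identity, $d_{DL}\beta = i_{\triangle_\beta}\omega$, and $\mathcal{L}_{\triangle_\alpha}\omega = d_{DL}(i_{\triangle_\alpha}\omega) = d_{DL}d_{DL}\alpha = 0$ (from closedness of $\omega$), the computation
\[ d_{DL}\mathcal{L}_{\triangle_\alpha}\beta \,=\, \mathcal{L}_{\triangle_\alpha}i_{\triangle_\beta}\omega \,=\, i_{\triangle_\beta}\mathcal{L}_{\triangle_\alpha}\omega + i_{[\triangle_\alpha,\triangle_\beta]}\omega \,=\, i_{[\triangle_\alpha,\triangle_\beta]}\omega \]
yields $\triangle_{[\alpha,\beta]} = [\triangle_\alpha,\triangle_\beta]$. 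This is the central lemma of the proof.

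For (ii), two observations trivialize most cases: $l_1\alpha = 0$ when $\alpha \in \mathcal{A}_0$ (since $\mathcal{A}_{-1}=0$), and every $d_{DL}$-exact form in $\mathcal{A}_0$ has vanishing Hamiltonian derivation (apply non-degeneracy to $d_{DL}d_{DL}=0$). For $\alpha \in \mathcal{A}_0$, the identity reduces to the Cartan relation $d_{DL}\mathcal{L}_{\triangle_\alpha}\beta = \mathcal{L}_{\triangle_\alpha}d_{DL}\beta$. For $\alpha \in \mathcal{A}_k$ with $k \geq 2$ all three terms vanish; for $\alpha \in \mathcal{A}_1$ the term $[l_1\alpha,\beta]$ vanishes because $l_1\alpha$ is exact, while the remaining two terms vanish by degree reasons.

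For (iii), the bracket is nonzero only when its first argument lies in $\mathcal{A}_0$, so in $[a,[b,c]] = [[a,b],c] + (-1)^{|a||b|}[b,[a,c]]$ every configuration in which $a$ or $b$ leaves $\mathcal{A}_0$ forces both sides to zero (since $\mathcal{L}_{\triangle_a}b$ stays in the same positive degree as $b$). The only nontrivial case is $a,b \in \mathcal{A}_0$; invoking the lemma of step (i), $\triangle_{[a,b]} = [\triangle_a,\triangle_b]$, and with $|a|=|b|=0$ the identity collapses to the standard commutator relation $\mathcal{L}_{\triangle_a}\mathcal{L}_{\triangle_b}c = \mathcal{L}_{[\triangle_a,\triangle_b]}c + \mathcal{L}_{\triangle_b}\mathcal{L}_{\triangle_a}c$. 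The main obstacle throughout is the key lemma of step (i), as it is the only place where both closedness and non-degeneracy of $\omega$ are used essentially; it supplies the Lie-algebra morphism $\alpha \mapsto \triangle_\alpha$ on $\mathcal{A}_0$ that underpins both subsequent verifications.
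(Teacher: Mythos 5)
Your proposal is correct and follows essentially the same route as the paper: the same three-part verification, the same case analysis on the degree of $\alpha$ for the derivation rule (with exact forms having vanishing Hamiltonian derivation by non-degeneracy), and the same key fact that $[\alpha,\beta]$ is again Hamiltonian with derivation $[\triangle_\alpha,\triangle_\beta]$, which reduces the Leibniz identity to the commutator relation for Lie derivatives. The only cosmetic difference is in how that key fact is obtained: you compute $d_{DL}\mathcal{L}_{\triangle_\alpha}\beta = i_{[\triangle_\alpha,\triangle_\beta]}\omega$ directly from $\mathcal{L}_{\triangle_\alpha}\omega = 0$ and $d_{DL}\beta = i_{\triangle_\beta}\omega$, whereas the paper writes $d_{DL}\mathcal{L}_{\triangle_\alpha}\beta = d_{DL}\{\alpha,\beta\}$ and invokes its earlier observation that $\{\alpha,\beta\}$ admits $[\triangle_\alpha,\triangle_\beta]$ as a Hamiltonian derivation; both are one-line Cartan-calculus arguments.
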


\begin{proof}
Let $\alpha, \beta \in A_0 = \Omega^{p-1}_{\text{Ham}} (DL,L)$. Then we have
$$d_{DL} [\alpha, \beta] = d_{DL} \mathcal{L}_{\triangle_\alpha} \beta = d_{DL} i_{\triangle_\alpha} d_{DL} \beta = d_{DL} \{\alpha, \beta \}.$$
This shows that $[\alpha, \beta] \in A_0 = \Omega^{p-1}_{\text{Ham}} (DL,L)$ with a Hamiltonian derivation given by $[\triangle_\alpha, \triangle_\beta].$ It is also clear that the bracket $[-,-]$ has degree $0$.

Next, we prove that the above defined bracket satisfies
$$d_{DL}[\alpha, \beta] = [d_{DL} \alpha, \beta ] + (-1)^{|\alpha|} [\alpha, d_{DL} \beta],~~ \text{ for all } \alpha, \beta \in A.$$
If $|\alpha| > 1$, then both sides of the above identity vanishes.  If $|\alpha| =1$, then the above identity reduces $[d_{DL} \alpha, \beta]=0$ which holds as the Hamiltonian derivation corresponding to $d_{DL}\alpha$ is zero. Finally, if $|\alpha| = 0$, then the above identity is equivalent to $d_{DL} \mathcal{L}_{\triangle_\alpha} \beta = \mathcal{L}_{\triangle_\alpha} d_{DL} \beta$ which holds automatically in a Lie algebroid.

Next, we prove the graded Leibniz identity of the bracket. For $|\alpha| > 0$ or $|\beta| > 0$, it follows from the definition of the bracket that both sides of the identity
$$[\alpha, [\beta, \gamma ]] = [[\alpha, \beta], \gamma ] + (-1)^{|\alpha| |\beta|} [\beta, [\alpha, \gamma]]$$
vanishes. If $|\alpha| = |\beta| = 0$, then
\begin{align*}
[\alpha, [\beta, \gamma]] = \mathcal{L}_{\triangle_\alpha} \mathcal{L}_{\triangle_\beta} \gamma =~& \mathcal{L}_{[\triangle_\alpha, \triangle_\beta]} \gamma + \mathcal{L}_{\triangle_\beta} \mathcal{L}_{\triangle_\alpha} \gamma \\
=~& [[\alpha, \beta], \gamma ] + [\beta, [\alpha, \gamma]].
\end{align*}
Hence the proof.
\end{proof}

\noindent {\bf Concluding remarks.} The notion of $L$-Courant algebroids are generalization of Courant algebroids in the realm of contact geometry. More precisely, like Courant algebroids are symplectic NQ-manifolds of degree $2$, $L$-Courant algebroids (contact Courant algebroids) are contact NQ-manifolds of degree $2$ \cite{grab}. There are some generalizations of Courant algebroids by relaxing the Jacobi identity of its bracket and they have suitable super-geometric interpretations. Pre-Courant algebroids are similar to Courant algebroids without the Jacobi identity of the bracket \cite{vais}. One the other hand, twisted Courant algebroids are similar to Courant algebroids on which the Jacobi identity of the bracket is controlled by a closed $4$-form \cite{hansen-strobl}. A twisted Courant algebroid corresponds to a $2$-term $Leib_\infty$-algebra \cite{sheng-liu}. One may also study similar generalizations of $L$-Courant algebroids, their supergeometric interpretations and associated algebraic structures.

\vspace{0.2cm}

\noindent {\bf Acknowledgements.} The author would like to thank Ms. Puja Mondal for carefully reading the manuscript and fix some typos. The research is supported by the Institute postdoctoral fellowship of IIT Kanpur (India).
%\mbox{ }\\

%\providecommand{\bysame}{\leavevmode\hbox to3em{\hrulefill}\thinspace}
%\providecommand{\MR}{\relax\ifhmode\unskip\space\fi MR }
% \MRhref is called by the amsart/book/proc definition of \MR.
% \providecommand{\MRhref}[2]{%
% \href{http://www.ams.org/mathscinet-getitem?mr=#1}{#2}
%}
%\providecommand{\href}[2]{#2}

\end{document}